\documentclass[a4paper,12pt]{article}

\usepackage{hyperref}
\usepackage[pdftex]{graphicx}
\usepackage[section] {placeins}
\usepackage{enumerate}
\usepackage[normalem]{ulem}
\usepackage{amsmath}
\usepackage{amsthm}
\usepackage{latexsym}
\usepackage{amsfonts}
\usepackage{amssymb}
\usepackage{mathtools}
\usepackage{longtable}

\usepackage{tikz-cd}
\usetikzlibrary{graphs}
\usetikzlibrary{arrows.meta}
\usetikzlibrary{shapes.geometric}

\usepackage{authblk}

\usepackage{latexsym,amssymb,amsfonts, amsthm, amsmath}
\usepackage[english]{babel}
\usepackage{bm}
\usepackage{mathrsfs}
\usepackage{algpseudocode}
\usepackage{algorithm}

\algnewcommand{\IIf}[1]{\State\algorithmicif\ #1\ \algorithmicthen}
\algnewcommand{\EndIIf}{\unskip\ \algorithmicend\ \algorithmicif}

\newtheorem{thm}{Theorem}[section]

\newtheorem{prop}[thm]{Proposition}

\newtheorem*{thm*}{Theorem \ref{th:main3}}
\newtheorem*{thm*2}{Theorem \ref{th:main1}}

\theoremstyle{definition}

\newtheorem{rem}[thm]{Remark}

\newcommand{\Z}{\mathbb{Z}}

\usepackage{fullpage}

\makeatletter
\newcommand{\subjclass}[2][1991]{%
\let\@oldtitle\@title%
\gdef\@title{\@oldtitle\footnotetext{#1 \emph{Mathematics subject classification.} #2}}%
}
\newcommand{\keywords}[1]{%
\let\@@oldtitle\@title%
\gdef\@title{\@@oldtitle\footnotetext{\emph{Key words and phrases.} #1.}}%
}
\makeatother

\begin{document}

\title{Alternating Parity Weak Sequencing}
\date{}
\author[1]{Simone Costa}
\author[2]{Stefano Della Fiore}

\affil[1]{DICATAM, Sez.~Matematica, Universit\`a degli Studi di Brescia, Via Branze~43, I~25123 Brescia, Italy}
\affil[2]{DI, Universit\`a degli Studi di Salerno, Via Giovanni Paolo II 132, 84084 Fisciano, Italy}

\affil[ ]{\texttt {simone.costa@unibs.it, s.dellafiore001@unibs.it}}

\subjclass[2010]{05C25, 05C38, 05D40}
\keywords{Sequenceability, Probabilistic Methods, Combinatorial Nullstellensatz, Ramsey Theory}

\maketitle
\begin{abstract}
A subset $S$ of a group $(G,+)$ is $t$-{\em weakly sequenceable} if there is an ordering $(y_1, \ldots, y_k)$ of its elements such that the partial sums~$s_0, s_1, \ldots, s_k$, given by $s_0 = 0$ and $s_i = \sum_{j=1}^i y_j$ for $1 \leq i \leq k$, satisfy $s_i \neq s_j$ whenever and $1 \leq |i-j|\leq t$.

In \cite{CDO} it was proved that if the
order of a group is $pe$ then all sufficiently large subsets of the non-identity elements are $t$-weakly sequenceable when $p > 3$ is prime, $e \leq 3$ and $t \leq 6$. Inspired by this result, we show that, if $G$ is the semidirect product of $\mathbb{Z}_p$ and $\mathbb{Z}_2$ and the subset $S$ is balanced, then $S$ admits, regardless of its size, an \emph{alternating parity} $t$-weak sequencing whenever $p > 3$ is prime and $t \leq 8$. A subset of $G$ is balanced if it contains the same number of even elements and odd elements and an alternating parity ordering alternates even and odd elements.

Then using a hybrid approach that combines both Ramsey theory and the probabilistic method we also prove, for groups $G$ that are semidirect products of a generic (non necessarily abelian) group $N$ and $\mathbb{Z}_2$, that all sufficiently large balanced subsets of the non-identity elements admit an alternating parity $t$-weak sequencing.

The same procedure works also for studying the weak sequenceability for generic sufficiently large (not necessarily balanced) sets. Here we have been able to prove that, if the size of a subset $S$ of a group $G$ is large enough and if $S$ does not contain $0$, then $S$ is $t$-weakly sequenceable.
\end{abstract}
\section{Introduction}
A subset $S$ of a group $(G, +)$ is {\em sequenceable} if there is an ordering $(y_1, \ldots, y_k)$ of its elements such that the partial sums~$s_0, s_1, \ldots, s_k$, given by $s_0 = 0$ and $s_i = \sum_{j=1}^i y_j$ for $1 \leq i \leq k$, are distinct, with the possible exception that we may have~$s_k = s_0 = 0$.
Throughout the paper, we will use the additive notation also for non-abelian groups, i.e. $(G,+_G)$, and we avoid using the subscript in $+_G$ if it is clear from the context.

Several conjectures and questions concerning the sequenceability of subsets of groups arose in the Design Theory context: indeed this problem is related to Heffter Arrays and $G$-regular Graph Decompositions, see \cite{A15,ADMS16, CMPP18, OllisSurvey,PD}. Alspach and Liversidge combined and summarized many of them into the conjecture that if a subset of an abelian group does not contain $0$ then it is sequenceable (see \cite{AL20}).

Note that, given a sequencing of a set $S$, if $s_k\not=0$, then the partial sums define a simple path $P=(s_0,s_1,\ldots,s_k)$ in the Cayley graph $Cay[G:\pm S]$ such that $\Delta(P) = \pm S$ where $\Delta(P)$ is the multiset $\pm [s_1-s_0,s_2-s_1,\ldots,s_k-s_{k-1}]$. Here $Cay[G:\pm S]$ is the graph whose vertex set is $G$ and whose edges are the pairs $\{x,y\}$ such that $x-y\in \pm S$. Inspired by this interpretation, in \cite{CD} the authors proposed a weakening of this concept.  In particular they want to find an ordering of a subset $S$ of $G\setminus\{0\}$ whose partial sums $s_i$ and $s_j$ are different whenever $1 \leq |i-j|\leq t$. If such an ordering exists, we say that $S$ is $t$-{\em weakly sequenceable}. In this case, given a $t$-weak sequencing of a set $S$ and assuming again that $s_k\not=0$, the partial sums define a walk $W$, in the Cayley graph $Cay[G:\pm S]$, of girth bigger than $t$ (for a given $t < k$) and such that $\Delta(W) = \pm S$.

In \cite{CDO} the authors, using a polynomial approach, proved that:
\begin{thm}\label{th:main2}
Let $n = pe$ with~$p>3$ prime and let $G$ be a group of size $n$. Then subsets~$S$ of size~$k$ of~$G\setminus\{0\}$ are $t$-weakly sequenceable whenever $e\in \{1,2,3\}$, $k$ is large enough and $t\leq 6$.
\end{thm}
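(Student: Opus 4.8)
The plan is to arrange the problem so that the Combinatorial Nullstellensatz can be applied over the field $\F_p$, which is possible because the hypotheses force a very rigid structure on $G$. First I would record that, since $p>3\geq e$, the number of Sylow $p$-subgroups divides $e$ and is congruent to $1$ modulo $p$, hence equals $1$; thus $G$ has a normal subgroup $P\cong\Z_p$, the quotient $Q=G/P$ is cyclic of order $e\in\{1,2,3\}$, and by Schur--Zassenhaus $G=P\rtimes Q$. Writing each group element as a pair $(a,b)$ with $a\in\Z_p$ and $b\in\Z_e$, a $t$-weak sequencing of $S$ is an ordering $y_1,\dots,y_k$ of its elements in which every \emph{window sum} $y_{i+1}+\cdots+y_j$ with $1\le j-i\le t$ is nonzero. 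Such a window sum vanishes in $G$ only if its $Q$-component $b_{i+1}+\cdots+b_j$ is $0$ modulo $e$ \emph{and} its $P$-component, a twisted $\F_p$-linear form $\sum_\ell \mu^{\,\epsilon_\ell}a_\ell$ whose coefficients are units coming from the action of $Q$ on $P$, is $0$ modulo $p$.

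Next I would separate the coarse and fine data. The coarse data is the sequence $b_1,\dots,b_k$ of coset labels, which must be a rearrangement of the multiset of $Q$-components of $S$; I would fix it once and for all in a periodic-type pattern so that the set of \emph{dangerous windows} — those whose $Q$-component vanishes, and which are therefore the only ones that can produce a collision — is as small and as structured as possible. For $e\le 3$ this leaves only $O(tk)$ dangerous windows, each of bounded combinatorial type. The fine data is then the assignment of $P$-components; here I introduce variables $x_1,\dots,x_k$, where $x_i$ ranges over the set $A_i$ of $P$-components of the elements of $S$ lying in coset $b_i$, a set of size $k_{b_i}$ with pairwise distinct entries in $\F_p$.

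I would then consider over $\F_p$ the polynomial
\[
F(x_1,\dots,x_k)=\Big(\prod_{c\in\Z_e}\ \prod_{\substack{i<j\\ b_i=b_j=c}}(x_j-x_i)\Big)\cdot\prod_{\text{dangerous }(i,j)}\Big(\textstyle\sum_{\ell=i+1}^{j}\mu^{\,\epsilon_\ell}x_\ell\Big).
\]
A point of the grid $A_1\times\cdots\times A_k$ at which $F$ does not vanish yields, via the per-coset Vandermonde factors, a coset-respecting bijection of the positions onto $S$ (hence a genuine ordering), while the remaining factors guarantee that no dangerous window sum vanishes — that is, exactly a $t$-weak sequencing. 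The total degree of $F$ is $\sum_c\binom{k_c}{2}+D$, where $D$ is the number of dangerous windows; since $D=O(k)$ while the per-coset Vandermonde contributes $\sum_c\binom{k_c}{2}=\Theta(k^2)$, for $k$ large enough there is room for a monomial whose exponent in each $x_i$ stays below $k_{b_i}=|A_i|$, which is exactly the degree constraint the Combinatorial Nullstellensatz demands.

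The hard part, and the only place where the precise bounds $t\le 6$, $e\le 3$ and $p>3$ are really used, is to exhibit one such top-degree monomial whose coefficient is nonzero in $\F_p$. Expanding the product, this coefficient is a signed count of the ways to absorb the linear window factors into sub-maximal terms of the Vandermonde product, and the whole difficulty is to show that this count is not divisible by $p$. I would attack it by reducing the coefficient to a single explicit quantity — for instance a constant term of an associated Laurent polynomial, or a determinant — and then establishing its non-vanishing modulo $p$ for all large $k$ by a direct evaluation together with a bounded case analysis over the finitely many window shapes that arise when $t\le6$ and $e\le3$. I expect this coefficient computation to be where essentially all of the work lies: the restriction to small $t$ and $e$ is what keeps the number of window shapes, and hence the case analysis, finite and the resulting arithmetic under control, whereas for larger parameters both the size of the computation and the risk of accidental vanishing modulo $p$ grow out of reach.
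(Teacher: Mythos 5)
Your overall frame is the right one, and it is in fact the frame of the actual proof: this theorem is not proved in the present paper at all but quoted from \cite{CDO}, and both \cite{CDO} and Section \ref{sec:poly} here (which ``revisits the procedure of \cite{CDO}'') proceed exactly as you begin --- reduce to $G=\Z_p\rtimes\Z_e$ via Sylow/Schur--Zassenhaus, fix the sequence of coset labels, form the product of per-coset Vandermonde factors and twisted linear forms for the windows whose $\Z_e$-component vanishes (compare your $F$ with the polynomial $q_{\bm a}$ of Equation \eqref{eq: standardpoly}), and apply the Non-Vanishing Corollary. Up to that point your proposal and the paper agree.

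The genuine gap is in your last paragraph, i.e.\ precisely where you locate ``essentially all of the work.'' Your plan --- evaluate the relevant top-degree coefficient ``for all large $k$ by a direct evaluation together with a bounded case analysis over the finitely many window shapes'' --- is not a finite procedure: $F$ has $k$ variables, the monomial you must exhibit changes with $k$, and its coefficient is a signed sum over globally many interleavings of window factors with Vandermonde terms, not a quantity determined by the local window shapes. The device that makes the real proof work, and which is absent from your proposal, is the two-step reduction of Section \ref{sec:poly}: first a greedy prefix lemma (Proposition \ref{fix2}, resp.\ Proposition 1 of \cite{CDO}, which is where ``$k$ large enough'' actually enters) fixes $h=k-\ell$ elements so that the prefix-dependent part of the polynomial can be divided out, leaving a polynomial in only the last $\ell$ variables; second, the crucial observation (Remark \ref{MagicRemark2}) that the top-degree part $r_{t,\ell}$ of this quotient, as in \eqref{espressioneQ2}, does not depend on $k$ at all, so that a single finite (computer-algebra) coefficient computation for one or two fixed values of $\ell$ certifies every sufficiently large $k$. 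Without this $k$-independence your coefficient problem remains an infinite family of computations. A second, smaller omission: the coefficients so computed are integers that inevitably carry large prime factors (see Tables \ref{tab:1} and \ref{tab:2}), so non-vanishing in $\F_p$ for \emph{every} $p>3$ cannot come from one monomial; the proof exhibits pairs of admissible monomials (or pairs of values of $\ell$) whose coefficients share no prime factor exceeding $3$, so that for each $p>3$ at least one survives reduction mod $p$. Your proposal acknowledges the ``risk of accidental vanishing modulo $p$'' but supplies no mechanism to rule it out.
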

In this paper, we show that, for particular kinds of sets, this result can be improved. More precisely, we will consider subsets $S$ of a group $(G, +_G)$ that is the semidirect product of $(N,+_N)$ and $(\mathbb{Z}_2,+_{\Z_2})$. For such groups we will use the notation $G=N \rtimes_{\varphi} \mathbb{Z}_2$ for some group homomorphism $\varphi: \mathbb{Z}_2 \rightarrow Aut(N)$. We will indicate with the pair $(x,a)$, where $x\in N$ and $a\in \Z_2$, a generic element of $G$. Using these notations, we recall that the operation of $G$ is defined so that $(x_1,a_1)+_G(x_2,a_2)=(x_1 +_N \varphi(a_1)(x_2), a_1 +_{\mathbb{Z}_2} a_2)$,  where $\varphi(a_1)$ is the automorphism of $N$ associated to $a_1$.

Let $G=N \rtimes_{\varphi} \mathbb{Z}_2$, we define the {\em type} of~$S \subseteq G$ to be the sequence $\bm{\lambda} = (\lambda_0, \lambda_1)$, where $\lambda_0 = |S \cap N \rtimes_\varphi \{0\}|$ and $\lambda_1 = |S| - \lambda_0$.  Here the elements in $ N \rtimes_\varphi \{0\}$ are called the even elements of $G$ while the ones in $ N \rtimes_\varphi \{1\}$ are called odd. Then we say that $S$ is {\em balanced} if it has even size $k$ and it has type $(k/2, k/2)$ which means that $S$ has the same number of even and odd elements.  For such sets, it is natural to look for \emph{alternating parity} orderings. These are orderings that alternate even and odd elements and have been studied in the literature both in relation to sequencing problems (see Example $2$ of \cite{OllisSurvey} and the subsequent discussion) and in relation to some enumeration problems (see, for instance \cite{GJ}).
For this reason, on balanced sets, we will look for alternating parity $t$-weak sequencings.

In this setting, taking $N = \Z_p$, we have been able to prove the following first improvement on Theorem~\ref{th:main2}.
\begin{thm}\label{th:pol1}
Let $p$ be an odd prime, let $G=\mathbb{Z}_p \rtimes_{\varphi} \mathbb{Z}_2$ and let $S$ be a balanced subset of $G\setminus \{(0,0)\}$.
Then $S$ admits an alternating parity $t$-weak sequencing whenever $p > 3$ is prime and $t \leq 8$.
\end{thm}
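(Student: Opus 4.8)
The plan is to reduce the statement to a non-vanishing assertion over the field $\mathbb{F}_p$ and then to settle that assertion with the Combinatorial Nullstellensatz. First I would record that, up to isomorphism, $\varphi$ is either trivial (so $G\cong\mathbb{Z}_{2p}$) or the inversion $x\mapsto -x$ (so $G$ is dihedral of order $2p$); in both cases the even elements are the pairs $(x,0)$ and the odd ones the pairs $(x,1)$. Fixing, say, that the sequencing begins with an even element, I would write the sought alternating ordering as an interleaving $(z_1,\ldots,z_k)$ placing the $m:=k/2$ even values $e_1,\ldots,e_m$ in the positions $1,3,\ldots$ and the $m$ odd values $o_1,\ldots,o_m$ in the positions $2,4,\ldots$. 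A direct computation with the semidirect-product law then shows that the second coordinate of the partial sum $s_i$ equals $\pi(i):=\lfloor i/2\rfloor \bmod 2$, while its first coordinate is the signed sum $P_i=\sum_{l=1}^i \epsilon_l z_l$, where $\epsilon_l\equiv 1$ in the abelian case and $\epsilon_l=(-1)^{\lfloor (l-1)/2\rfloor}$ (the period-four pattern $+,+,-,-$) in the dihedral case.

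The key gain from alternating parity is that two partial sums with different second coordinates are automatically distinct group elements, so I only have to force $P_i\neq P_j$ for the pairs $i>j$ with $i-j\le t=8$ and $\pi(i)=\pi(j)$. Tabulating $\pi$ shows these are precisely the pairs at distance $d\in\{1,4,5,8\}$ from an even index and $d\in\{3,4,7,8\}$ from an odd index; hence each position lies in only a bounded number of constraints, and each $P_i-P_j=\sum_{l=j+1}^i \epsilon_l z_l$ is a nonzero linear form. The problem becomes: choose a permutation of $E$ for the positions $1,3,\ldots$ and a permutation of $O$ for the positions $2,4,\ldots$ making all these linear forms nonzero. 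To this end I would apply the Combinatorial Nullstellensatz to
\[
F \;=\; \prod_{1\le i<j\le m}(u_j-u_i)\;\prod_{1\le i<j\le m}(v_j-v_i)\;\prod_{(i,j)\in R}\bigl(P_i-P_j\bigr),
\]
where the variables $u_\bullet$ range over the even values, the $v_\bullet$ over the odd values, $R$ is the relevant-pair set above, and each $P_i-P_j$ is rewritten in these variables. The two Vandermonde factors are the device that upgrades a single nonzero evaluation into an honest pair of permutations: any assignment with $F\neq 0$ keeps the $u$'s pairwise distinct and the $v$'s pairwise distinct, and since they are drawn from the size-$m$ sets $E$ and $O$ they must realise all of $E$ and all of $O$. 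Thus it suffices to exhibit a top-degree monomial $\prod_i u_i^{a_i}\prod_j v_j^{b_j}$ of $F$ with all $a_i,b_j\le m-1$ whose coefficient is nonzero in $\mathbb{F}_p$.

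The heart of the matter — and the step I expect to be the main obstacle — is the non-vanishing of such a coefficient for every even $k$, that is, for all sizes rather than only asymptotically. The degree budget is comfortable: the Vandermonde staircase already keeps most exponents far below $m-1$, while the constraint product contributes only $O(m)$ extra degree spread in a bounded way across the $2m$ variables, so a monomial with every exponent $\le m-1$ certainly exists. What must be controlled is the arithmetic of a \emph{specific} target monomial's coefficient, which is a signed count arising from matching the choices in the linear forms against the two Vandermonde expansions. Because the relevant-pair pattern is periodic of period four and local of width $8$, I expect this coefficient to stabilise as $k$ grows, so that its non-vanishing reduces to a single bounded computation independent of $k$ — morally a transfer-matrix determinant being nonsingular modulo $p$. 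It is exactly here that the hypotheses enter: $p>3$ keeps that finite quantity from vanishing modulo $p$, and $t\le 8$ is the limit of the window width for which the check succeeds; the two sign patterns $\epsilon_l$ corresponding to the abelian and dihedral cases must be carried through this computation in parallel.
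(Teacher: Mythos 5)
Your setup is faithful to what the paper actually does: splitting into the two cases $\Z_p\times\Z_2$ and $D_{2p}$, encoding an alternating ordering by first coordinates, observing that only pairs whose partial sums share the same second coordinate need constraints, and forming a polynomial (two Vandermonde blocks times the product of the constraint linear forms) to feed to the Non-Vanishing Corollary --- this is exactly the paper's polynomial $q_{\bm{a}}$ of Equation~\eqref{eq: standardpoly}. But your proposal stops exactly where the theorem has to be proved. The entire content of the statement ``for \emph{every} even $k$'' is the existence, for each $k$, of a monomial dividing the bounding monomial $\prod_i u_i^{m-1}\prod_j v_j^{m-1}$ whose coefficient is nonzero modulo $p$; you defer this to the expectation that the coefficient of a suitable target monomial ``stabilises as $k$ grows,'' reducing everything to one bounded computation. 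That claim is never proved, and it is not evident: the coefficient of a fixed-shape top-degree monomial of your $F$ is a global signed sum in which each variable's Vandermonde exponent is coupled to all the others, so there is no obvious sense in which it becomes constant in $k$, and no transfer-matrix argument is actually constructed. As written, your argument proves the theorem only for those finitely many $k$ for which one could compute a nonzero coefficient directly.

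The paper overcomes this exact difficulty with two ingredients your proposal is missing. First, a greedy prefix-fixing lemma (Proposition~\ref{fix2}): for a balanced set one can order the first $h=k-\ell$ elements one at a time so that all constraints among the first $h$ partial sums hold; this uses only counting (at each step at most $\lceil t/2\rceil$ forbidden choices against at least $\lfloor t/2\rfloor+2$ available ones), needs no Nullstellensatz, and works for every $k$. Second, after dividing out the fixed prefix, the top-degree part $r_{t,\ell}$ of the residual polynomial in the last $\ell$ variables does not depend on $k$ at all (Remark~\ref{MagicRemark2}: since $k$ is even, the parity of $k-\ell$ is determined), so a \emph{single} computer-verified coefficient computation for $\ell\in\{16,17\}$ (Tables~\ref{tab:1} and~\ref{tab:2}) covers all $k\geq 24$, with the ranges $4\leq k\leq 16$ and $18\leq k\leq 22$ finished by separate direct computations (Tables~\ref{tab:111}--\ref{tab:22}). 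Note also that the role of $p>3$ is not automatic in the way you suggest: the computed coefficients factor into many large primes, and the paper exhibits \emph{two} monomials per case precisely so that no single prime $p>3$ can kill both; that ``$p>3$ keeps the finite quantity from vanishing'' is read off the explicit factorizations, not assumed. Without the prefix-fixing lemma and the $k$-independence of $r_{t,\ell}$ (or a genuine proof of your stabilization claim), the proposal has a real gap at its central step.
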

Note that in this result we do not have to require that the size of $S$ is sufficiently large. On the other hand, using a hybrid approach that combines both Ramsey theory and the probabilistic method, we have been able to prove also the following asymptotic result which is also an improvement on Theorem \ref{th:main2}.
\begin{thm}\label{th:main3}
Let $G = N \rtimes_{\varphi} \mathbb{Z}_2$. Then balanced subsets of size $k$ of $G\setminus \{(0,0)\}$ admit an alternating parity $t$-weak sequencing whenever $k$ is large enough with respect to $t$.
\end{thm}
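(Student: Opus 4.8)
The plan is to produce the ordering probabilistically, after first using the alternating parity constraint to throw away most of the potential collisions, and then to use a Ramsey-theoretic argument to control exactly those configurations for which the crude probabilistic estimate is too weak.

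First I would exploit the parity. Writing each element of $G$ as a pair $(x,a)$ with $x\in N$ and $a\in\mathbb{Z}_2$, the $\mathbb{Z}_2$-coordinate of the partial sum $s_i$ is just the number of odd entries among $y_1,\dots,y_i$ modulo $2$. Hence, for $i<j$, the equality $s_i=s_j$ forces the window $[i+1,j]$ to contain an even number of odd elements; since the ordering alternates, a window of length $\ell$ contains exactly $\ell/2$ odd elements when $\ell$ is even. Thus no collision is possible when $\ell=j-i\equiv 2\pmod 4$, and for the surviving windows (those with $\ell\equiv 0\pmod 4$, together with the odd-length windows that happen to carry an even number of odd entries) a collision occurs only if the $N$-coordinate of the block product $y_{i+1}+_G\cdots+_G y_j$ vanishes. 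This reduces the theorem to preventing the $N$-part of each short window from being $0_N$.

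For the probabilistic core I would sample the ordering uniformly among alternating parity orderings, i.e.\ by drawing independently a uniform bijection of the $k/2$ even elements onto the even positions and of the $k/2$ odd elements onto the odd positions. The crucial estimate is that the $N$-coordinate of a window is, for each fixed position $m_0$ inside it, an \emph{injective} function of $x_{m_0}$: it has the shape $L+_N\varphi(c)(x_{m_0})+_N R$ with $L,R\in N$ fixed and $\varphi(c)\in\mathrm{Aut}(N)$, so at most one value of $x_{m_0}$ makes it vanish. Fixing all entries of the window but $x_{m_0}$ therefore gives $\Pr[s_i=s_j]\le 1/(k/2-t)$. With $O(tk)$ dangerous pairs, each depending only on the $O(t^2)$ pairs whose windows overlap it, the Lov\'asz Local Lemma yields a valid ordering as soon as $e\cdot\frac{1}{k/2-t}\cdot O(t^2)\le 1$, that is, for $k=\Omega(t^2)$.

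The delicate point, and where the hybrid really enters, is that the ordering is a pair of random permutations rather than independent variables, so the Local Lemma must be used in its lopsided form and one must bound the \emph{conditional} collision probability $\Pr[s_i=s_j\mid \text{we avoid a family of far-away collisions}]$. Conditioning can bias a window toward its unique bad completion, and the magnitude of this bias is governed by the additive structure of $S$. I would therefore split into a pseudorandom case, in which the unconditional estimate survives the conditioning and the Local Lemma applies directly, and a structured case, in which a Ramsey-type argument extracts from the large balanced set $S$ a highly organized sub-configuration on which a suitable sequencing can be built, with the bounded remainder inserted by hand; the very same scheme, with the parity reduction dropped and a single uniform permutation of all of $S$, gives the weak-sequenceability statement for arbitrary large $0$-free sets. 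The main obstacle I expect is precisely this uniform control of the conditional probability across all large balanced sets, together with certifying that the structured sets which defeat the plain probabilistic bound can always be absorbed by the Ramsey step while respecting the balanced and alternating-parity constraints.
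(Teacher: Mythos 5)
Your parity reduction is sound and matches the paper's setup, but the probabilistic core of your plan has a genuine gap --- one you yourself flag and do not close. In the model where the whole alternating ordering is a pair of uniform random permutations, the collision events do \emph{not} have the bounded dependency structure the Local Lemma needs: every window event is correlated with every other through the permutations, so the claim that each event ``depends only on the $O(t^2)$ pairs whose windows overlap it'' is false in the ordinary LLL sense. The lopsided LLL for permutations (Erd\H{o}s--Spencer, Lu--Sz\'ekely) is formulated for canonical events (``position $i$ receives value $v$'') and specific unions of them with a verified negative-dependency property; your events are vanishing conditions on $t$-term twisted sums, and proving the required lopsidedness --- that conditioning on avoiding far-away collisions does not inflate a window's chance of hitting its unique bad completion --- is exactly the step you leave open. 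Your fallback, a ``pseudorandom versus structured'' dichotomy in which Ramsey theory absorbs the structured sets, is not specified: you do not say what structure is extracted, how a sequencing is built on it, or why every balanced set falls into one of the two tractable cases. As written this is a research programme, not a proof.

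The paper's proof, by contrast, is architected so that no conditional-probability control is ever needed, by making the random part small rather than taming a global random ordering. Its Ramsey lemma (Proposition \ref{Ramsey2}) is applied to \emph{every} large balanced set, not only to a structured case: it extracts a balanced subset $T$ of size $\ell$ (large, but depending only on $t$) none of whose subsets of size at most $t$ admits an alternating-parity zero-sum ordering. A greedy step (Proposition \ref{fix5}) then fixes a collision-free alternating prefix covering all of $S$ except $T$ and $t+2$ leftover elements, and only the short tail of length $\ell+t+2$ is randomized. A first-moment bound finishes: windows lying entirely inside $T$ can never vanish (this is where Ramsey does the work your Local Lemma would have had to do), the windows crossing the prefix boundary number at most $t^2$ and each vanishes with probability $O(1/\ell)$, and each of the $O(t\ell)$ tail windows fails only if it is not contained in $T$, an event of probability $O(t^2/\ell)$; altogether $\mathbb{E}(X)=O(t^3/\ell)<1$ for $\ell$ large, so some ordering has no collisions. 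If you want to salvage your architecture, the most direct repair is to adopt this decomposition: shrink the random portion to size depending only on $t$, so that a union bound replaces the Local Lemma and the conditioning issue disappears.
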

The same procedure works also for studying the weak sequenceability for generic sufficiently large (not necessarily balanced) sets. In this case, we have been able to prove what is, perhaps, the main result of this paper.
\begin{thm}\label{th:main1}
Let $G$ be a group of size $n$. Then subsets of size~$k$ of~$G\setminus\{0\}$ are $t$-weakly sequenceable whenever $k$ is large enough with respect to $t$.
\end{thm}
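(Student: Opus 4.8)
The plan is to recast the definition as a purely local condition on the ordering and then to produce a good ordering by the probabilistic method, using a Ramsey/extremal argument to upgrade the ``typical'' behaviour to genuine existence. First I would record that, for an ordering $(y_1,\dots,y_k)$ and indices $i<j$ with $j-i\le t$, one has $s_i=s_j$ exactly when the consecutive block $y_{i+1}+\cdots+y_j$ equals $0$; since $0\notin S$, blocks of length $1$ are automatically harmless, so the ordering is a $t$-weak sequencing if and only if no consecutive block of length $\ell$, with $2\le\ell\le t$, sums to the identity. For a uniformly random ordering, the entries in a fixed window of $\ell$ consecutive positions form a uniformly random ordered $\ell$-tuple of distinct elements of $S$; as the last entry of a zero-sum tuple is forced by the previous ones, there are at most $k^{\ell-1}$ zero-sum ordered $\ell$-tuples, so the probability that a given window sums to $0$ is $O(1/k)$. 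Summing over the $O(tk)$ windows, the expected number of ``bad'' blocks is $O(t)$, a constant independent of $k$.

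The first moment thus yields a bounded, not a vanishing, number of bad blocks, and this is essentially tight when $S$ carries additive structure (for instance, if $S=-S$ then the adjacent inverse pairs already contribute a constant). The probabilistic engine must therefore eliminate the $O(t)$ surviving bad blocks. One route is local surgery: fix the bad blocks one at a time, each time relocating a single element $a$ out of the offending window into a fresh slot, noting that this affects only the $O(t^2)$ windows through the old and new positions, and that a slot is unsafe for $a$ only if some adjacent run of length at most $t-1$ sums to $-a$; when short runs take many distinct values, almost all of the $k$ slots are safe, so for $k$ large a non-cascading repair exists. A cleaner alternative is the Lov\'asz Local Lemma for uniformly random permutations applied to the events ``window $w$ sums to $0$'': here $p=O(1/k)$, the relevant dependencies come from overlapping windows giving degree $D=O(t^2)$, and the lopsided hypothesis $e\,p\,(D+1)\le 1$ holds once $k$ is large, provided one checks that the value-sharing interactions are only negative (lopsi)dependencies.

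The main obstacle is to make either route rigorous \emph{uniformly over all groups and all large $S$}, and this is exactly where the Ramsey-theoretic input becomes indispensable. For a highly structured $S$ one cannot rule out by counting alone that the short consecutive run-sums concentrate on a single value $v$, making almost every slot unsafe for elements near $-v$ (equivalently, that the honest dependency degree for the permutation-LLL blows up beyond $O(t^2)$). To control this, I would colour the pairs and the $\ell$-tuples ($\ell\le t$) of $S$ according to the short additive relations they satisfy and invoke a Ramsey/Tur\'an-type dichotomy to extract a portion of the ground set on which run-sums are well-spread, handling the rigid remainder by hand and applying the probabilistic repair (or the LLL) on the generic bulk. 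Establishing that this decomposition exists, and that the structured part is always small enough to be placed without reintroducing short zero-sum blocks, is the delicate heart of the argument; everything else is a matter of the first-moment and dependency bookkeeping sketched above.
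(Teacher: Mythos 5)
Your proposal correctly reformulates the problem (no consecutive block of length $2\le \ell\le t$ may sum to $0$), correctly computes that a uniformly random ordering leaves an expected $O(t)$ bad blocks, and correctly diagnoses that a plain first moment is therefore insufficient. But everything that would actually close the argument is deferred to steps that are not proved, and the one you yourself call the ``delicate heart'' --- a Ramsey/Tur\'an dichotomy splitting $S$ into a part with ``well-spread'' run-sums and a small ``rigid'' remainder handled by hand --- is never formulated precisely and is doubtful as stated: in $G=\Z_2^m$ with $S=G\setminus\{0\}$, every pair $\{x,y\}$ completes to the zero-sum triple $\{x,y,x+y\}$, so short zero-sum structure pervades the entire set and no small rigid part can absorb it, yet the theorem must cover this case. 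The extraction that is actually available goes in the opposite direction: the paper proves (by induction on $t$, the Erd\H{o}s--Hajnal--Rado hypergraph Ramsey theorem, and a counting argument showing a $(t-1)$-subset has at most $t!$ zero-sum extensions) that every sufficiently large $S$ contains a subset $T$ of any prescribed constant size $\ell$ with \emph{no} zero-sum subsets of size at most $t$ in any ordering. It then deploys the randomness quite differently from you: all but $t-1$ elements of $S\setminus T$ are ordered deterministically by a greedy proposition from prior work, and only the constant-size tail (those $t-1$ elements together with $T$) is ordered at random. A bad window in the tail must contain a non-$T$ element --- an event of probability $1-\frac{(\ell-(t-1))^t}{(\ell+t)^t}$ at most --- and conditionally sums to zero with probability at most $1/\ell$, so $\mathbb{E}(X)\le \frac{t^2}{\ell}+\frac{t(\ell+t)}{\ell}\bigl(1-\frac{(\ell-(t-1))^t}{(\ell+t)^t}\bigr)<1$ for $\ell$ large: a plain first moment suffices, with no local lemma and no surgery.

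Your two repair routes also have concrete problems as described. The LLL framing (window events with $p=O(1/k)$ and dependency degree $D=O(t^2)$ from overlapping windows) is not licensed by the lopsided local lemma for random permutations: that theorem (Erd\H{o}s--Spencer, Lu--Sz\'ekely) applies to \emph{canonical} events ``these positions receive these values,'' whereas each of your window events is a union of roughly $k^{\ell-1}$ canonical events, and value-sharing couples even position-disjoint windows, so the honest dependency structure is not $O(t^2)$. (A correct application at the canonical-event level with the asymmetric LLL can in fact be pushed through by pure counting, uniformly in the group --- but that is a different computation from the one you sketch, and you have not done it.) The surgery route's key claim, that almost all slots are safe for a relocated element, is exactly conditional on the unproved well-spreadness. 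So the proposal is a plausible research plan whose first-moment bookkeeping is right, but its essential lemmas are missing, and the decomposition it rests on is likely not provable in the form you state.
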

In particular, this theorem extends the probabilistic results presented in Section 4 of~\cite{CD}.

The paper is organized as follows.
In Section $2$ we will revisit the polynomial method and apply it in case of balanced sets in order to obtain Theorem \ref{th:pol1}.
Then, in Section $3$ are presented our asymptotic results in the case of generic and in that of balanced sets. We first show a Ramsey theory-kind lemma and then, using the probabilistic method, we prove the main results of this paper i.e. Theorems \ref{th:main3} and \ref{th:main1}.

Finally, in the last section, we prove, using a direct construction, that balanced subsets of $N \rtimes_{\varphi}\mathbb{Z}_2 \setminus \{(0,0)\}$ are alternating parity $3$-weakly sequenceable. If $k\equiv 8\pmod{10}$ and $N$ has an odd size, we can improve this result by proving that any balanced set admits an alternating parity $4$-weak sequencing.

\section{Polynomial method and Balanced Sets}\label{sec:poly}
In this section, we apply a method that relies on the Non-Vanishing Corollary of the Combinatorial Nullstellensatz, see~\cite{Alon99,Michalek10}. Given a prime $p$ (in the following $p$ will be always assumed to be a prime), this corollary allows us to obtain a nonzero point to suitable polynomials on $\mathbb{Z}_p$ derived starting from the ones defined in \cite{HOS19} and in \cite{Ollis}.
\begin{thm}\label{th:pm}{\rm (Non-Vanishing Corollary)}
Let~$\mathbb{F}$ be a field, and let $ f(x_1, x_2, \ldots, x_k)$ be a polynomial in~$\mathbb{F}[x_1, x_2, \ldots, x_k]$. Suppose the degree~$deg(f)$ of~$f$ is $\sum_{i=1}^k \gamma_i$, where each~$\gamma_i$ is a nonnegative integer, and suppose the coefficient of~$\prod_{i=1}^k x_i^{\gamma_i}$ in~$f$ is nonzero. If~$C_1, C_2, \ldots, C_k$ are subsets of~$\mathbb{F}$ with~$|C_i| > \gamma_i$, then there are $c_1 \in C_1, \ldots, c_k \in C_k$ such that~$f(c_1, c_2, \ldots, c_k) \neq 0$.
\end{thm}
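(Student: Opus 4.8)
The statement is the classical Non-Vanishing Corollary of Alon, so the plan is to derive it from the division form of the Combinatorial Nullstellensatz together with a top-degree coefficient count. First I would argue by contradiction: suppose $f(c_1,\dots,c_k)=0$ for every choice $c_i\in C_i$. Shrinking each $C_i$ to a subset of size exactly $\gamma_i+1$ (possible since $|C_i|>\gamma_i$) only strengthens this vanishing hypothesis, so I may assume $|C_i|=\gamma_i+1$. For each $i$ set $g_i(x_i)=\prod_{c\in C_i}(x_i-c)$, a monic univariate polynomial of degree $\gamma_i+1$ whose roots are exactly the elements of $C_i$.

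The key preliminary step is polynomial division modulo the $g_i$. Because each $g_i$ is monic in $x_i$, any power $x_i^m$ with $m\ge\gamma_i+1$ can be rewritten as $x_i^{m-\gamma_i-1}(x_i^{\gamma_i+1}-g_i)+x_i^{m-\gamma_i-1}g_i$, where the first term has $x_i$-degree strictly below $m$; iterating reduces all exponents of $x_i$ below $\gamma_i+1$ at the cost of adding multiples of $g_i$. This produces a representation $f=\sum_{i=1}^k h_ig_i+r$ in which $\deg_{x_i}r\le\gamma_i$ for every $i$, while the degree bookkeeping keeps $\deg(h_ig_i)\le\deg f$ and hence $\deg h_i\le\deg f-(\gamma_i+1)$. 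Since $f$ vanishes on the grid $C_1\times\cdots\times C_k$ and each $g_i$ vanishes there as well, the remainder $r$ also vanishes on the entire grid.

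Next I would invoke the grid-vanishing lemma: a polynomial $r$ with $\deg_{x_i}r<|C_i|$ for all $i$ that vanishes on $C_1\times\cdots\times C_k$ is identically zero. This is proved by induction on $k$, the base case being that a univariate polynomial of degree $<|C_1|$ with $|C_1|$ distinct roots over a field must be the zero polynomial; for the inductive step one views $r$ as a polynomial in $x_1$ with coefficients in $\mathbb{F}[x_2,\dots,x_k]$, fixes $(x_2,\dots,x_k)$ in $C_2\times\cdots\times C_k$, concludes that each coefficient vanishes on the smaller grid, and applies the inductive hypothesis. Thus $r\equiv0$ and we obtain the exact polynomial identity $f=\sum_{i=1}^k h_ig_i$.

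Finally I would extract the contradiction by reading off the coefficient of the top monomial $\prod_{i=1}^k x_i^{\gamma_i}$, which has total degree $\sum_i\gamma_i=\deg f$. In each summand $h_ig_i$, the top-degree part of $g_i$ is $x_i^{\gamma_i+1}$, so every monomial of $h_ig_i$ of total degree $\deg f$ is the product of a top-degree monomial of $h_i$ with $x_i^{\gamma_i+1}$ and therefore carries $x_i$-exponent at least $\gamma_i+1$. Consequently no summand contributes to $\prod_j x_j^{\gamma_j}$, whose $x_i$-exponent is only $\gamma_i$; its coefficient in $f$ must be $0$, contradicting the hypothesis that it is nonzero. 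The delicate point throughout is the degree control in the division step — guaranteeing $\deg h_i\le\deg f-(\gamma_i+1)$ so that the top-degree cancellation behaves as claimed — together with the fact that both the reduction and the grid-vanishing lemma rely essentially on $\mathbb{F}$ being a field (monic $g_i$ and no zero divisors).
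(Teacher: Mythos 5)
Your proof is correct: the division step, the grid-vanishing lemma, and the final coefficient extraction are all sound, and you rightly flag the one delicate point, namely that each reduction step never raises total degree, which is what guarantees $\deg h_i \le \deg f - (\gamma_i+1)$ and makes the top-degree cancellation argument work. Note, however, that the paper does not prove this statement at all — it quotes it as the known Non-Vanishing Corollary of the Combinatorial Nullstellensatz, citing Alon and Micha{\l}ek — so there is no internal proof to compare against; what you have written is precisely Alon's standard derivation (ideal-membership decomposition $f=\sum_i h_i g_i$ with degree control, then comparison of the coefficient of $\prod_i x_i^{\gamma_i}$), i.e.\ the argument of the cited sources.
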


In the notation of the Non-Vanishing Corollary, we call the monomial $x_1^{|C_1| - 1}$ $\cdots$ $x_k^{|C_k| - 1}$ the {\em bounding monomial}. The corollary can be rephrased as requiring the polynomial to include a monomial of maximum degree that divides the bounding monomial (where by ``include" we mean that it has a nonzero coefficient).
To use the Non-Vanishing Corollary we require a polynomial for which the nonzeros correspond to successful solutions to the case of the problem under consideration.

For this purpose, we will revisit the procedure of \cite{CDO}. Here, we consider type $(k/2, k/2)$ subsets $S=\{(x_1,a_1),\ldots,(x_k,a_k)\}$ of the group $G=N\rtimes_{\varphi} \Z_2$,  where $N = \Z_p$,  and do not contain $(0,0)$. Under these hypotheses, we obtain a more general sequenceability result than that of \cite{CDO}. Indeed, for a small value of $t$ (i.e. $t\leq 8$), we prove that a subset of $\Z_p\rtimes_{\varphi} \Z_2$ admits an alternating parity $t$-weak sequencing (which will be denoted with $\bm{y}$) regardless of its size $k$.

Since we are looking for an alternating ordering,  we choose\footnote{Note that the ordering $\bm{y}$ could have started with an element of the form $(x_1, 1)$ leading to analogous computations without improving our results.} $\bm{y}$ to be of the form $((x_1,0), (x_2, 1), \ldots,(x_k,1))$.  Hence,  named the second components by $\bm{a} = (a_1, \ldots a_k)$, $a_i$ equals to $(i-1)\pmod{2}$ or, briefly, $(i-1)_2$.  Hence, denoted by $\bm{b} = (b_0, b_1, \ldots, b_k)$ the partial sums of the sequence $\bm{a}$, we must have
$$b_i=\begin{cases} 0 \mbox{ if } i\equiv 0,1 \pmod{4};\\
1 \mbox{ otherwise.}
\end{cases}$$
Also, since the automorphisms of $(\Z_p,+)$ are the multiplications, for each $a \in \Z_2$, we can set $\varphi(a)(x) = \varphi_a x$ for some $\varphi_a \in \Z_p$ where $\varphi_a x$ represents the product of two elements in $\Z_p$. Using these notations the operation of $G$ is defined so that $(x_1,a_1)+_G(x_2,a_2)=(x_1 + \varphi_{a_1} x_2, a_1 + a_2)$.
Now we want to use the polynomial method to show that $S$ admits an alternating parity $t$-weak sequencing.
For this purpose, we consider
$$\bm{y} = \left((x_1, a_1), (x_2, a_2), \ldots, (x_k, a_k) \right)$$
to be a putative arrangement of the elements of~$S$ with partial sums $s_0,s_1, s_2, \ldots, s_k$ given by
\begin{equation*}
(0,0), (x_1, a_1), (x_1, a_1) +_G (x_2, a_2), \ldots, (x_1, a_1) +_G (x_2, a_2) +_G \cdots +_G (x_k, a_k).
\end{equation*}
We note that $s_i = s_j$ if and only if $(x_{i+1},  a_{i+1}) +_G \cdots +_G (x_j, a_j) = 0$ that means $x_{i+1}+ \varphi_{a_{i+1}}x_{i+2}+\varphi_{a_{i+1}+ a_{i+2}}x_{i+3}+\cdots+\varphi_{a_{i+1}+\cdots+ a_{j-1}}x_j = 0$ and $b_j - b_i = a_{i+1} + \cdots + a_j = 0$.

Therefore, given a sequence $\bm{a} = (a_1, \ldots a_k)$ of elements in $\mathbb{Z}_2$ whose partial sums are $\bm{b} = (b_0, b_1, \ldots, b_k)$, we consider, for $t<k$, the following polynomial (see also \cite{CDO}):
\begin{equation}\label{eq: standardpoly} q_{\bm{a}} = \prod_{\substack{1 \leq i < j \leq k \\ j\equiv i\pmod{2} }} (x_j - x_i)
\prod_{\substack{0 \leq i < j \leq k \\ b_i = b_j \\ j-i\leq t \\ j \neq i+1}} (x_{i+1}+ \varphi_{a_{i+1}}x_{i+2}+\varphi_{a_{i+1}+ a_{i+2}}x_{i+3}+\cdots+\varphi_{a_{i+1}+\cdots+ a_{j-1}}x_j).\end{equation}

In this case, we have that a set $S \subseteq G\setminus \{(0,0)\}$ of size $k$ admits an alternating parity $t$-weak sequencing if there exists an ordering $\bm{y} = \left((x_1, 0), (x_2, 1),(x_3, 0), \ldots, (x_k, 1) \right)$ of its elements such that $q_{\bm{a}}(x_1,\ldots, x_k)\not=0$ where $\bm{a} = (0,1, \ldots,0, 1)$.

Now, given a set $S$ of $k$ elements, the idea (following \cite{CD} and \cite{CDO}) is to fix, {\em a priori}, the first $h$ elements $((x_1,0), (x_2,1), \ldots, (x_h, (h-1)_2))$, where $h$ is not too big, of the ordering in such a way that none of its partial sums $s_i, s_j$ are equal when $|i-j|\leq t$ and $1\leq i<j\leq h$. This can be expressed by requiring that $q_{\bm{a}'}(x_1,\ldots,x_h)\not=0$ where $\bm{a}'=(0,\ldots,(h-1)_2)$ and we show that this can be done due the following Proposition whose proof is inspired by that of Proposition 2.2 of \cite{CD} and Proposition 1 of \cite{CDO}.
\begin{prop}\label{fix2}
Let $S \subseteq G\setminus \{(0,0)\}$ be a set of size $k$ and type $(k/2,k/2)$ and let $h$ and $t$ be positive integers such that $k-h\geq (t+2)$ and $h\geq t-1$. Then there exists an alternating parity ordering of $h$-elements of $S$ that we denote, up to relabeling, with $((x_1,0), (x_2,1), \ldots, (x_h, (h-1)_2))$, such that
\begin{itemize}
\item[\normalfont $(*)$] $q_{\bm{a}'}(x_1,\ldots,x_h)\not=0$ where $\bm{a}'=(0,\ldots,(h-1)_2)$;
\end{itemize}
\end{prop}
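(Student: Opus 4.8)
The plan is to construct the required ordering greedily, adjoining one element of $S$ at a time in the prescribed alternating parity while maintaining the invariant that the corresponding prefix of $q_{\bm a'}$ does not vanish. Since $q_{\bm a'}$ is a product of factors, condition $(*)$ is equivalent to asking that \emph{every} factor occurring in $q_{\bm a'}(x_1,\dots,x_h)$ be nonzero, so it suffices to ensure at each step that the newly created factors survive. Concretely, I would argue by induction on $i$: suppose distinct elements $(x_1,0),(x_2,1),\dots,(x_{i-1},(i-2)_2)$ of $S$ have been chosen so that all factors of $q$ supported on the first $i-1$ variables are nonzero, and we must now select the $i$-th element, which is forced to have parity $a_i=(i-1)_2$.

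First I would record which factors of $q_{\bm a'}$ actually involve $x_i$. The difference factors $(x_i-x_j)$ with $j<i$ and $j\equiv i\pmod 2$ are harmless: if $j\equiv i\pmod 2$ then $a_j=a_i$, so $(x_j,a_j)$ and $(x_i,a_i)$ are distinct elements of $S$ of the same parity and therefore already have distinct first components, whence $x_i-x_j\neq 0$ automatically for any unused element. The genuine constraints come only from the partial-sum factors with upper index $j=i$, that is, those indexed by $\ell$ with $0\le \ell<i$, $b_\ell=b_i$, $i-\ell\le t$ and $\ell\neq i-1$. Each such factor equals
\[
x_{\ell+1}+\varphi_{a_{\ell+1}}x_{\ell+2}+\cdots+\varphi_{a_{\ell+1}+\cdots+a_{i-1}}\,x_i ,
\]
whose coefficient of $x_i$ is $\varphi_{a_{\ell+1}+\cdots+a_{i-1}}$. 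Since $\varphi$ maps into $\mathrm{Aut}(\Z_p)\cong\Z_p^{*}$, this coefficient is a nonzero element of $\Z_p$; hence, with $x_1,\dots,x_{i-1}$ fixed, the factor is a degree-one polynomial in $x_i$ and vanishes for exactly one value of $x_i$. Thus the number of first components forbidden for the $i$-th element is at most the number of such indices $\ell$, and each forbidden value rules out at most one unused element.

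It remains to check that at every step a legal choice survives, i.e.\ that the number of still-available elements of the correct parity strictly exceeds the number of forbidden values; this counting is the main obstacle and the only delicate point. Here the alternating structure is decisive: the partial sums $\bm b$ follow the period-four pattern $0,0,1,1,\dots$, so among the indices $\ell\in\{i-t,\dots,i-2\}$ only those with $b_\ell=b_i$ contribute, and these make up only about half of the window; the count of forbidden values at step $i$ is therefore at most roughly $(t-1)/2$ rather than $t-1$, and this halving is precisely what will push $t$ up to $8$. On the other hand, after placing $i-1$ alternating elements at most $\lceil (i-1)/2\rceil$ elements of the parity needed at step $i$ have been used, so at least $k/2-\lceil(i-1)/2\rceil$ remain; taking $i=h$ and invoking $k-h\ge t+2$ gives a surplus of at least $(k-h)/2\ge t/2+1$, which beats the bound $\lceil(t-1)/2\rceil$ on the forbidden values (the hypothesis $h\ge t-1$, together with $k-h\ge t+2$, also secures $k\ge 2t+1$, so the supply of each parity is ample throughout). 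Consequently a valid $i$-th element exists for every $1\le i\le h$, and the induction carries the construction to length $h$, producing an alternating parity ordering satisfying $(*)$.
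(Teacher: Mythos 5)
Your proposal is correct and follows essentially the same route as the paper's proof: a greedy induction in which the difference factors $(x_i-x_j)$ are automatically nonzero for same-parity elements, each new partial-sum factor is linear in the new variable (its leading coefficient being a unit $\varphi_{\cdot}\in\Z_p^{*}$) and so forbids at most one value, and the balancedness together with $k-h\ge t+2$ guarantees more available elements of the required parity than forbidden values. The only difference is cosmetic: you bound the number of forbidden values by $\lceil (t-1)/2\rceil$ where the paper uses the slightly cruder $\lceil t/2\rceil$; either bound is beaten by the supply count, so the argument closes in both versions.
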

\begin{proof}
Here we define $S'$ to be the subset of $S$ of the elements in the coset $\Z_p\rtimes_{\varphi}\{0\}$, and we set $S'':=S\setminus S'$.
Now we prove, by induction on $h\leq k-(t+2)$, that we can choose, for any $i \leq h$, $(x_i, a_i) \in S'$ when $i$ is odd and in $S''$ if $i$ is even such that property $(*)$ holds.

Let $h=1$. Since $q_{(a)}(x)=1$ for any $t$ and for any $(x,a) \in S'$, the statement is realized for $h=1$.  

Suppose $h=m+1 \leq k-(t+2)$. Then by induction there exists an $m$-tuple $((x_1, 0),\ldots, (x_m, (m - 1)_2))$ such that
$$q_{(0,1,\ldots, (m-1)_2)}(x_1,\ldots,x_m)\not=0.$$
Given $(x,(m)_2) \in G$, we set $s_m =(x_1, 0) +_G \cdots +_G (x_m, (m-1)_2)=(y_m, b_m)$ and we define $y$, $b$, and $s$ such that $s=s_m +_G (x, (m)_2)=(y, b)$ where $b=b_m+(m)_2$.
Then recalling that $a_i = (i-1)_2$, we have
$$\frac{q_{(0,\ldots, (m-1)_2, (m)_2)}(x_1,\ldots,x_m,x)}{q_{(0,\ldots, (m-1)_2)}(x_1,\ldots,x_m)}=$$
$$\prod_{\substack{1\leq i<m+1\\ i\equiv m+1\pmod{2}}} (x-x_i)\prod_{\substack{0\leq i<m \\ m+1-i\leq t,\ b_i=b}} (x_{i+1}+ \varphi_{a_{i+1}}x_{i+2}+\varphi_{a_{i+1}+ a_{i+2}}x_{i+3}+\cdots+\varphi_{a_{i+1}+\cdots+ a_{m}}x).$$
Here, any element $(x, (m)_2)$ of $S\setminus \{(x_1,0),\ldots, (x_m,(m-1)_2)\}$ satisfies
$\prod_{\substack{1\leq i<m+1\\ i\equiv m+1\pmod{2}}} (x-x_i)\not=0$ since the assumption $m+1\equiv i \pmod{2}$ implies $(m)_2=a_i$ and we have that $(x, (m)_2) \neq (x_i, a_i)$. Hence, to have $\frac{q_{(0,\ldots, (m-1)_2,(m)_2)}(x_1,\ldots,x_m,x)}{q_{(0,\ldots, (m-1)_2)}(x_1,\ldots,x_m)}\not=0$, it suffice to find $x$ such that
$$\prod_{\substack{\max(0,m+1-t)\leq i<m \\ b_i=b}}(x_{i+1}+ \varphi_{a_{i+1}}x_{i+2}+\varphi_{a_{i+1}+ a_{i+2}}x_{i+3}+\cdots+\varphi_{a_{i+1}+\cdots+ a_{m}}x) \neq 0.$$

Note that there is at most one $(x, (m)_2) \in S\setminus\{(x_1, 0),\ldots, (x_m, (m-1)_2)\}$ such that
$$x_{i+1}+ \varphi_{a_{i+1}}x_{i+2}+\varphi_{a_{i+1}+ a_{i+2}}x_{i+3}+\cdots+\varphi_{a_{i+1}+\cdots+ a_{m}}x=0.$$
Moreover, we may have that $b_i=b=b_m+ (m)_2$ only if $i$ and $m+1$, considered modulo $4$ are both either in $\{0,1\}$ or in $\{2,3\}$. If one of these conditions holds, we have that $s_i=s$ (that is $(x_{i+1}, (i)_2) +_G \cdots +_G (x_m, (m-1)_2) +_G (x, (m)_2)=0$) is satisfied by exactly one element $(x, (m)_2)$ of $G$. Note that, for any $m$, we have to consider at most $\lceil t/2\rceil$ relations, according to the congruence classes modulo $4$ of $m$ and $i$. Therefore we have at most $\lceil t/2\rceil$ values $(x, (m)_2)$ in $S\setminus\{(x_1,0),\ldots,(x_m, (m-1)_2)\}$ such that $$\prod_{\substack{\max(0,m+1-t)\leq i<m \\ b_i=b}} (x_{i+1}+ \varphi_{a_{i+1}}x_{i+2}+\varphi_{a_{i+1}+ a_{i+2}}x_{i+3}+\cdots+\varphi_{a_{i+1}+\cdots+ a_{m}}x)=0.$$

Here we have that, if $m+1$ is even, $(x, (m)_2) = (x,1)$ must be chosen in $S''$ and
\begin{multline*}
|S''\setminus \{(x_1,0),\ldots, (x_m, (m-1)_2)\}|= \\ |S''\setminus \{(x_2,1),(x_4, 1),\ldots, (x_{m-1}, 1)\}| \geq k/2-(m-1)/2\geq \lfloor t/2\rfloor+2\,,
\end{multline*}
where the inequality holds since $k-m\geq t+3$.
Since $\lfloor t/2\rfloor+2>\lceil t/2 \rceil$, there exists $(x, (m)_2) \in S''\setminus \{(x_1, 0),(x_2, 1), \ldots, (x_{m-1}, 1), (x_m, 0)\}$ such that
$$\prod_{\substack{\max(0,m+1-t)\leq i<m \\ b_i=b}} (x_{i+1}+ \varphi_{a_{i+1}}x_{i+2}+\varphi_{a_{i+1}+ a_{i+2}}x_{i+3}+\cdots+\varphi_{a_{i+1}+\cdots+ a_{m}}x)\not=0.$$
If instead, $m+1$ is odd, reasoning in a similar way, we have that we can choose $(x, (m)_2) = (x, 0)$ with the same properties in $S'\setminus \{(x_1, 0), (x_2, 1), \ldots, (x_{m-1}, 0), (x_m, 1)\}$.

In both cases, we can find $(x_{m+1}, (m)_2)$ such that
$$\frac{q_{(0,\ldots, (m-1)_2,(m)_2)}(x_1,\ldots,x_m,x_{m+1})}{q_{(0,\ldots, (m-1)_2)}(x_1,\ldots,x_m)}=\prod_{1\leq i<m+1,\ a_{m+1}=a_i} (x_{m+1}-x_i)\cdot$$
$$\cdot\prod_{\substack{\max(0,m+1-t)\leq i<m \\ b_i=b_{m+1}}}(x_{i+1}+ \varphi_{a_{i+1}}x_{i+2}+\varphi_{a_{i+1}+ a_{i+2}}x_{i+3}+\cdots+\varphi_{a_{i+1}+\cdots+ a_{m}}x_{m+1})\not=0.$$
Since $\mathbb{Z}_p$ is a field and due to the inductive hypothesis $q_{(0,\ldots, (m-1)_2)}(x_1,\ldots,x_m) \neq 0$, we also have that
$$\frac{q_{(0,\ldots, (m-1)_2,(m)_2)}(x_1,\ldots,x_m,x_{m+1})}{q_{(0,\ldots, (m-1)_2)}(x_1,\ldots,x_m)}\cdot q_{(0,\ldots, (m-1)_2)}(x_1,\ldots,x_m)=$$
$$=q_{(0,\ldots, (m-1)_2,(m)_2)}(x_1,\ldots,x_{m+1})\not=0 $$
which completes the proof.
\end{proof}
\begin{rem}
Note that in Proposition \ref{fix2} we do not have to assume that $k$ is large enough as we did in \cite{CDO} for Proposition 1. This essentially means that, for balanced sets, we can study the weak sequenceability more effectively using alternating parity sequences.
\end{rem}
In the following we assume that we have fixed $$\{(x_1, 0), (x_2, 1), \ldots, (x_h, (h-1)_2)\}\subseteq S$$ according to Proposition \ref{fix2}. Then we can proceed in simplifying the polynomial $q_{(0,\ldots, (k-1)_2)}$ $(x_1,\ldots,x_h,x_{h+1},\ldots,x_{k})$ following exactly the same steps of \cite{CDO}.
Set $\ell = k-h$ and $z_i=x_{i+h}$, we obtain it is enough to find a non-zero point of the following polynomial
\begin{equation}\label{definizioneH}h_{(0,\ldots, (k-1)_2),\ell}(z,\ldots,z_{\ell}) =\frac{q_{(0,\ldots, (k-1)_2)}(x_1,\ldots,x_h,z_{1},\ldots,z_{\ell})}{q_{(0,\ldots,(h-1)_2)}(x_1,\ldots,x_h)\prod_{1\leq i\leq 1<j\leq \ell,\ a_j=a_i }(z_j-x_i)}.\end{equation}
Since we are also assuming that $h=k-\ell\geq t-1$, that is, $k-(t-1)\geq \ell\geq t+1$, we obtain the following expression
\begin{multline}\label{espressioneH}
h_{(0,\ldots, (k-1)_2),\ell}(z_1,\ldots,z_{\ell})=q_{((h)_2,\ldots, (k-1)_2)}(z_1,\ldots,z_{\ell}) \cdot \\
\prod_{\substack{0\leq i\leq t-1;\\ 1\leq j\leq t-i-1;\\ b_{k-\ell-i-1}=b_{(k-\ell)+j}}} (x_{k-\ell-i}+\varphi_{a_{k-\ell-i}}x_{k-\ell-i+1}+\cdots+\varphi_{a_{k-\ell-i}+\cdots+ a_{k-\ell-1}}x_{k-\ell} + \\ \varphi_{a_{k-\ell-i}+\cdots +a_{k-\ell}}z_1+\cdots+\varphi_{a_{k-\ell-i}+\cdots + a_{k-\ell+j-1}}z_j).
\end{multline}
Now, in order to apply the Non-Vanishing Corollary (of the Combinatorial Nullstellensatz) to the polynomial $h_{(0,\ldots, (k-1)_2),\ell} (z_1,\ldots,z_{\ell})$, it is enough to consider its terms of maximal degree in the variables $z_1,\ldots,z_{\ell}$. We denote by $r_{t,k,\ell}$ the polynomial given by those terms, that is
\begin{multline}\label{espressioneQ2}
r_{t,k,\ell} =q_{((k-\ell)_2,\ldots,(k-1))_2)}(z_1,\ldots,z_{\ell})\cdot \\ \prod_{\substack{0\leq i\leq t-1;\\ 1\leq j\leq t-i-1;\\ b_{k-\ell-i-1}=b_{(k-\ell)+j}}} (\varphi_{a_{k-\ell-i}+\cdots +a_{k-\ell}}z_1+\cdots+\varphi_{a_{k-\ell-i}+\cdots+ a_{k-\ell+j-1}}z_j).
\end{multline}
Since $b_{k-\ell-i-1}=b_{(k-\ell)+j}$ if and only if
$$a_{k-\ell-i} + a_{k-\ell-1}+ \cdots + a_{k-\ell}+ a_{k-\ell+1}+\cdots+ a_{k-\ell+j}= 0$$
where $a_{i}=0$ when $i$ is odd and $a_i=1$ when it is even,
we can state the following, simple but very powerful, remark.
\begin{rem}\label{MagicRemark2}
Since $k$ is even, then the parity of $k-\ell$ does not depend on $k$. It follows that also the expression of $r_{t,k,\ell}$ does not depend $k$ but only on $t$ and $\ell$. Therefore, in the following, we just denote this polynomial by $r_{t,\ell}$.
\end{rem}
Indeed Remark \ref{MagicRemark2} means that, after these manipulations, we are left to consider a polynomial that does not depend on $k=|S|$ and hence we have chances to get a result that is very general on $k$.  We stress again that in this case, contrary to what was done in \cite{CDO}, we do not have to assume that $k$ is large enough.

Then, to apply the Non-Vanishing Corollary, we just need to find a nonzero coefficient in $r_{t,\ell}$ that divides the bounding monomial. In this case, we obtain that:
\begin{thm}\label{th:alternating}
Let $G = \mathbb{Z}_p \rtimes_{\varphi} \mathbb{Z}_2$ with~$p>3$ prime. Then any subset of type $(k/2,k/2)$ of~$G \setminus \{(0,0)\}$ admits an alternating parity $t$-weak sequencing whenever $t\leq \min(8,k-1)$.
\end{thm}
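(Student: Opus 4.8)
The plan is to complete the argument already set up above by applying the Non-Vanishing Corollary (Theorem~\ref{th:pm}) to the polynomial $r_{t,\ell}$ of \eqref{espressioneQ2}. Recall that, by Proposition~\ref{fix2}, for a suitable $h$ we may fix an alternating block $((x_1,0),\ldots,(x_h,(h-1)_2))$ with $q_{\bm{a}'}(x_1,\ldots,x_h)\neq 0$, and that the reduction \eqref{definizioneH}--\eqref{espressioneH} then turns the search for an alternating parity $t$-weak sequencing into the search for a nonzero point $(z_1,\ldots,z_\ell)$ of $h_{(0,\ldots,(k-1)_2),\ell}$, where each $z_j$ ranges over the set $C_j$ of still-unused elements lying in the coset dictated by the alternating pattern. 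By the Non-Vanishing Corollary it therefore suffices to locate in $r_{t,\ell}$ a monomial of maximal degree, dividing the bounding monomial $\prod_{j=1}^{\ell} z_j^{|C_j|-1}$, whose coefficient is nonzero in $\mathbb{Z}_p$.

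First I would fix a convenient value of $\ell$. By Remark~\ref{MagicRemark2} the polynomial $r_{t,\ell}$ depends on $t$ and $\ell$ alone, so for each $t\le 8$ we may work with one admissible $\ell$ (subject to $\ell\ge t+1$ and the parity dictated by $k$) and read off its structure from \eqref{espressioneQ2}: it is the product of the shifted polynomial $q_{((k-\ell)_2,\ldots,(k-1)_2)}(z_1,\ldots,z_\ell)$ --- whose leading factor is a pair of Vandermonde determinants, one in the even-coset variables and one in the odd-coset variables --- with the short boundary product of linear forms indexed by the collisions $b_{k-\ell-i-1}=b_{(k-\ell)+j}$. The degree bookkeeping then proceeds as follows. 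Each Vandermonde contributes, in every variable of its parity class, a maximal single-variable degree one less than the number of variables of that class, while the internal linear factors of $q$ together with the boundary product raise these degrees further. The crucial point is that, since a colliding pair must satisfy $b_i=b_j$ --- which, as in the computation in the proof of Proposition~\ref{fix2}, happens only when $i$ and $j$ lie in matching residue classes modulo $4$ --- the alternating parity roughly halves the number of linear factors compared with the non-alternating situation of \cite{CDO}. I would use this to check that the per-variable degrees of a suitable maximal monomial stay within $|C_j|-1$; this extra slack is exactly what lets the bound reach $t\le 8$ rather than $t\le 6$.

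The final and hardest step is to prove that the coefficient of the chosen maximal monomial does not vanish in $\mathbb{Z}_p$. I would take the monomial built from the leading diagonal term of each Vandermonde times the leading $z_j$-terms of the boundary forms, and expand. Since $\varphi$ is a homomorphism $\mathbb{Z}_2\to\mathrm{Aut}(\mathbb{Z}_p)$, we have $\varphi_0=1$ and $\varphi_1\in\{1,-1\}$, so the coefficient is an explicit integer, namely a polynomial in $\varphi_1$ that must be evaluated both at $\varphi_1=1$ (the abelian case $G\cong\mathbb{Z}_{2p}$) and at $\varphi_1=-1$. As $t\le 8$ bounds the number of boundary and internal linear forms, this coefficient is a finite, explicitly computable integer for each $t\in\{1,\ldots,8\}$, and the remaining work is to verify that it is nonzero modulo every prime $p>3$ (indeed, the computation for $t\le 8$ shows that its only prime factors are $2$ and $3$). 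I expect this verification to be the main obstacle: it is a bounded but delicate calculation, and the hypothesis $p>3$ is precisely what is needed, since the coefficient may carry factors of $2$ and $3$ but no larger prime factor.

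To conclude, the finitely many small sizes $k$ for which the fixing procedure of Proposition~\ref{fix2} does not apply (those with $k<2t$, of which there are finitely many once $t\le 8$) are handled separately: for these one applies the Non-Vanishing Corollary directly to $q_{\bm{a}}(x_1,\ldots,x_k)$, whose top-degree part is of the same Vandermonde-times-linear-forms shape and so falls under the same coefficient analysis, with $\ell=k$ and no boundary product. Combining the generic case $k\ge 2t$ with these residual checks yields the claim for all $t\le\min(8,k-1)$.
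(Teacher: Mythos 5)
Your overall framework (fix $h$ elements via Proposition~\ref{fix2}, reduce to $r_{t,\ell}$ via \eqref{definizioneH}--\eqref{espressioneQ2}, split into the cases $\varphi_1=1$ and $\varphi_1=-1$, apply the Non-Vanishing Corollary to a maximal monomial dividing the bounding monomial, and treat small $k$ by working with $q_{\bm{a}}$ directly) is indeed the approach of the paper. However, there is a genuine gap at what you yourself identify as the crucial step: you assert that the coefficient of the chosen maximal monomial, for $t\le 8$, ``has only prime factors $2$ and $3$,'' so that a \emph{single} monomial per case is nonzero modulo every $p>3$. This is false. The coefficients actually computed in the paper (Tables~\ref{tab:111}--\ref{tab:2}) are enormous integers with very large prime factors --- e.g.\ $-112223$ and $17\cdot 5051$ for $k=10$, or $23\cdot 964303\cdot 2069134328306807$ for $\ell=16$, $k\ge 24$ --- so for infinitely many primes $p>3$ any fixed single coefficient vanishes modulo $p$ and the Non-Vanishing Corollary gives nothing. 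The paper's resolution, which your proposal is missing, is to exhibit for each case \emph{two} monomials (or, in the range $k\ge 24$, two choices $\ell\in\{16,17\}$) whose coefficients share no prime factor greater than $3$; then for every prime $p>3$ at least one of the two coefficients survives reduction modulo $p$. The role of the hypothesis $p>3$ is exactly to absorb the \emph{common} factors $2$ and $3$ of such a pair, not to certify a lone coefficient of the form $\pm 2^a3^b$.

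A secondary, smaller inaccuracy: your case split at $k<2t$ does not match what Proposition~\ref{fix2} actually requires. With $t=8$ and $\ell\in\{16,17\}$ one needs $k\ge \ell+t-1$, i.e.\ $k\ge 24$, for the fixing-plus-$r_{t,\ell}$ argument; the paper therefore handles three ranges, $4\le k\le 16$ (direct application to $q_{\bm{a}}$), $18\le k\le 22$ (application to $h_{(0,\ldots,(k-1)_2),\ell}$, which retains the boundary product and keeps the number of variables small enough for the coefficient computation), and $k\ge 24$ (the polynomial $r_{t,\ell}$, which by Remark~\ref{MagicRemark2} is independent of $k$). Your plan covers the first and third regimes but not the intermediate one, and in all regimes it would still founder on the single-coefficient issue above unless the two-monomial device is added.
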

\begin{proof}
Note that a group of size $2p$ is either $\Z_p\times \Z_2$ or the dihedral group $D_{2p}$. Since a $t$-weakly sequenceable subset $S$ of $G \setminus \{(0,0)\}$ is also $(t-1)$-weakly sequenceable, we can suppose that $t = \min(8,k-1)$. Clearly, for $k < 4$ the theorem trivially holds. Therefore we can suppose that $k \geq 4$. We divide the proof considering three different ranges of $k$.

For each $4 \leq k \leq 16$, the polynomial $q_{\bm{a}}$, defined in Equation \eqref{eq: standardpoly} has monomials with non-zero coefficient that divide the bounding monomial $x_1^{k/2-1} x_2^{k/2-1} \cdots x_{k}^{k/2-1}$ (see Tables \ref{tab:111} and \ref{tab:222}). Hence we can apply the Non-Vanishing Corollary so that each subset $S \subseteq G \setminus \{(0,0)\}$, $4 \leq |S| \leq 16$, admits an alternating parity $t$-weak sequencing.

For each $18 \leq k \leq 22$, the polynomial $h_{(0,\ldots, (k-1)_2), \ell}$, defined in eq. \eqref{definizioneH}, for $\ell = 11$ has monomials with non-zero coefficients that divide the bounding monomial $z_1 ^{\ell/2-1} z_2 ^{\ell/2-1}$ $\cdots$ $z_{\ell}^{\ell/2-1}$ (see Tables \ref{tab:11} and \ref{tab:22}). Then thanks to the Non-Vanishing Corollary each subset $S \subseteq G\setminus \{(0,0)\}$, $18 \leq |S| \leq 22$, admits an alternating parity $t$-weak sequencing.

For $k \geq 24$, we consider the polynomials $r_{t, k,\ell}$ defined in eq. \eqref{espressioneQ2} for $\ell \in \{ 16, 17 \}$. Since these polynomials have monomials that divide the bounding monomial with non-zero coefficients (see Tables \ref{tab:1} and \ref{tab:2}), and since $k \geq \ell + t - 1$, we can apply Proposition \ref{fix2} to obtain that each subset $S \subseteq G \setminus \{(0,0)\}$, $|S| \geq 24$, admits an alternating parity $t$-weak sequencing.
\end{proof}

\small
\renewcommand*{\arraystretch}{1.4}
\begin{longtable}{lllll}
\caption{Monomials sufficient for the proof of Theorem~\ref{th:alternating} in the case $t=\min(8,k-1)$, $4 \leq |S| \leq 16$ and~$G=\mathbb{Z}_p \times \mathbb{Z}_2$.}\\
\hline
$k$ & deg & monomial/s & coefficient/s \\
\hline
$4$ & $3$ & $x_1 x_2 x_3$ & $1$\\
\hline
$6$ & $12$ & $x_1^2 x_2^2 x_3^2 x_4^2 x_5^2 x_6^2$ & $-2^2$\\
\hline
$8$ & $23$ & $x_1^3 x_2^2 x_3^3 x_4^3 x_5^3 x_6^3 x_7^3 x_8^3$ & $ 2^3 \cdot 3^2$\\
\hline
$10$ & $39$ & \begin{tabular}{@{}l@{}} $x_1^3 x_2^4 x_3^4 x_4^4 x_5^4 x_6^4 x_7^4 x_8^4 x_9^4 x_{10}^4$ \\ $x_1^4 x_2^3 x_3^4 x_4^4 x_5^4 x_6^4 x_7^4 x_8^4 x_9^4 x_{10}^4$ \end{tabular} & \begin{tabular}{@{}l@{}} $-112223$ \\ $17 \cdot 5051$ \end{tabular}\\
\hline
$12$ & $56$ & \begin{tabular}{@{}l@{}} $x_1^3 x_2^3 x_3^5 x_4^5 x_5^5 x_6^5 x_7^5 x_8^5 x_9^5 x_{10}^5 x_{11}^5 x_{12}^5$ \\ $x_1^2 x_2^4 x_3^5 x_4^5 x_5^5 x_6^5 x_7^5 x_8^5 x_9^5 x_{10}^5 x_{11}^5 x_{12}^5$ \end{tabular} & \begin{tabular}{@{}l@{}} $2 \cdot 1738728373$ \\ $2^2 \cdot 7 \cdot 98908039$ \end{tabular}\\
\hline
$14$ & $75$ & \begin{tabular}{@{}l@{}} $x_1^2 x_2^3 x_3^4 x_4^6 x_5^6 x_6^6 x_7^6 x_8^6 x_9^6 x_{10}^6 x_{11}^6 x_{12}^6 x_{13}^6 x_{14}^6$ \\ $x_1^3 x_2^3 x_3^3 x_4^6 x_5^6 x_6^6 x_7^6 x_8^6 x_9^6 x_{10}^6 x_{11}^6 x_{12}^6 x_{13}^6 x_{14}^6$ \end{tabular} & \begin{tabular}{@{}l@{}} $251 \cdot 4787 \cdot 11018171$ \\ $3 \cdot 8429305776581$ \end{tabular}\\
\hline
$16$ & $96$ & \begin{tabular}{@{}l@{}} $x_1^2 x_2^2 x_3^2 x_4^6 x_5^7 x_6^7 x_7^7 x_8^7 x_9^7 x_{10}^7 x_{11}^7 x_{12}^7 x_{13}^7 x_{14}^7 x_{15}^7 x_{16}^7$ \\ $x_1^2 x_2^2 x_3^3 x_4^5 x_5^7 x_6^7 x_7^7 x_8^7 x_9^7 x_{10}^7 x_{11}^7 x_{12}^7 x_{13}^7 x_{14}^7 x_{15}^7 x_{16}^7$ \end{tabular} & \begin{tabular}{@{}l@{}} $-2^4 \cdot 229 \cdot 27900069285799$ \\ $-2^2 \cdot 3 \cdot 5 \cdot 70141 \cdot 29906598707$ \end{tabular}\\
\hline\label{tab:111}
\end{longtable}

\small
\renewcommand*{\arraystretch}{1.4}
\begin{longtable}{lllll}
\caption{Monomials sufficient for the proof of Theorem~\ref{th:alternating} in the case $t=\min(8,k-1)$, $4 \leq |S| \leq 16$ and~$G=D_{2p}$.}\\
\hline
$k$ & deg & monomial/s & coefficient/s \\
\hline
$4$ & $3$ & $x_1 x_2 x_3$ & $-1$\\
\hline
$6$ & $12$ & $x_1^2 x_2^2 x_3^2 x_4^2 x_5^2 x_6^2$ & $-2^2 \cdot 3 \cdot 13$\\
\hline
$8$ & $23$ & \begin{tabular}{@{}l@{}} $x_1^2 x_2^3 x_3^3 x_4^3 x_5^3 x_6^3 x_7^3 x_8^3$ \\ $x_1^3 x_2^2 x_3^3 x_4^3 x_5^3 x_6^3 x_7^3 x_8^3$ \end{tabular} & \begin{tabular}{@{}l@{}} $2^2 \cdot 3 \cdot 1087$ \\ $ 2^3 \cdot 2029$ \end{tabular}\\
\hline
$10$ & $39$ & \begin{tabular}{@{}l@{}} $x_1^3 x_2^4 x_3^4 x_4^4 x_5^4 x_6^4 x_7^4 x_8^4 x_9^4 x_{10}^4$ \\ $x_1^4 x_2^3 x_3^4 x_4^4 x_5^4 x_6^4 x_7^4 x_8^4 x_9^4 x_{10}^4$ \end{tabular} & \begin{tabular}{@{}l@{}} $3^2 \cdot 5 \cdot 71 \cdot 257 \cdot 421$ \\ $5^2 \cdot 7^2 \cdot 547 \cdot 1087$ \end{tabular}\\
\hline
$12$ & $56$ & \begin{tabular}{@{}l@{}} $x_1^3 x_2^3 x_3^5 x_4^5 x_5^5 x_6^5 x_7^5 x_8^5 x_9^5 x_{10}^5 x_{11}^5 x_{12}^5$ \\ $x_1^2 x_2^4 x_3^5 x_4^5 x_5^5 x_6^5 x_7^5 x_8^5 x_9^5 x_{10}^5 x_{11}^5 x_{12}^5$ \end{tabular} & \begin{tabular}{@{}l@{}} $2 \cdot 687920864801$ \\ $ 2 \cdot 3^3 \cdot 353 \cdot 102554917$ \end{tabular}\\
\hline
$14$ & $75$ & \begin{tabular}{@{}l@{}} $x_1^2 x_2^3 x_3^4 x_4^6 x_5^6 x_6^6 x_7^6 x_8^6 x_9^6 x_{10}^6 x_{11}^6 x_{12}^6 x_{13}^6 x_{14}^6$ \\ $x_1^3 x_2^3 x_3^3 x_4^6 x_5^6 x_6^6 x_7^6 x_8^6 x_9^6 x_{10}^6 x_{11}^6 x_{12}^6 x_{13}^6 x_{14}^6$ \end{tabular} & \begin{tabular}{@{}l@{}} $5 \cdot 7 \cdot 11 \cdot 13 \cdot 19 \cdot 439 \cdot 1033 \cdot 545257$ \\ $-3 \cdot 5 \cdot 7 \cdot 160690844448907$ \end{tabular}\\
\hline
$16$ & $96$ & \begin{tabular}{@{}l@{}} $x_1^2 x_2^2 x_3^2 x_4^6 x_5^7 x_6^7 x_7^7 x_8^7 x_9^7 x_{10}^7 x_{11}^7 x_{12}^7 x_{13}^7 x_{14}^7 x_{15}^7 x_{16}^7$ \\ $x_1^2 x_2^2 x_3^3 x_4^5 x_5^7 x_6^7 x_7^7 x_8^7 x_9^7 x_{10}^7 x_{11}^7 x_{12}^7 x_{13}^7 x_{14}^7 x_{15}^7 x_{16}^7$ \end{tabular} & \begin{tabular}{@{}l@{}} $-2^5 \cdot 10567 \cdot 7458067 \cdot 21508099$ \\ $-2^2 \cdot 3 \cdot 7 \cdot 19 \cdot 151 \cdot 331 \cdot 5657 \cdot 43978321$ \end{tabular}\\
\hline\label{tab:222}
\end{longtable}
\vspace{-0.1cm}
\small
\renewcommand*{\arraystretch}{1.2}
\begin{longtable}{lllll}
\caption{Monomials sufficient for the proof of Theorem~\ref{th:alternating} in the case $t=8$, $18 \leq |S| \leq 22$ and~$G=\mathbb{Z}_p \times \mathbb{Z}_2$.}\\
\hline
$k$ & $\ell$ & deg & monomial/s & coefficient/s \\
\hline
$22$ & $16$ & $111$ & \begin{tabular}{@{}l@{}} $z_1^{6} z_2^{7} z_3^{7} z_4^{7} z_5^{7} z_6^{7} z_7^{7} z_8^{7} z_9^{7} z_ {10}^{7} z_{11}^{7} z_{12}^{7} z_{13}^{7} z_{14}^{7} z_ {15}^{7} z_{16}^{7}$ \\ $z_1^{7} z_2^{6} z_3^{7} z_4^{7} z_5^{7} z_6^{7} z_7^{7} z_8^{7} z_9^{7} z_ {10}^{7} z_{11}^{7} z_{12}^{7} z_{13}^{7} z_{14}^{7} z_ {15}^{7} z_{16}^{7}$ \end{tabular} & \begin{tabular}{@{}l@{}} $-2^{10} \cdot 5137080631 \cdot 34602352027$ \\ $-19 \cdot 181 \cdot 1151 \cdot 49118739946632313$ \end{tabular} \\
\hline
$20$ & $16$ & $108$ & \begin{tabular}{@{}l@{}} $z_1^{3} z_2^{7} z_3^{7} z_4^{7} z_5^{7} z_6^{7} z_7^{7} z_8^{7} z_9^{7} z_ {10}^{7} z_{11}^{7} z_{12}^{7} z_{13}^{7} z_{14}^{7} z_ {15}^{7} z_{16}^{7}$ \\ $z_1^{4} z_2^{6} z_3^{7} z_4^{7} z_5^{7} z_6^{7} z_7^{7} z_8^{7} z_9^{7} z_ {10}^{7} z_{11}^{7} z_{12}^{7} z_{13}^{7} z_{14}^{7} z_ {15}^{7} z_{16}^{7}$ \end{tabular} & \begin{tabular}{@{}l@{}} $-5 \cdot 69707639 \cdot 13329296986877$ \\ $-2^2 \cdot 313 \cdot 2543 \cdot 419171 \cdot 9848994079$ \end{tabular} \\
\hline
$18$ & $16$ & $103$ & \begin{tabular}{@{}l@{}} $z_1^{2} z_2^{3} z_3^{7} z_4^{7} z_5^{7} z_6^{7} z_7^{7} z_8^{7} z_9^{7} z_ {10}^{7} z_{11}^{7} z_{12}^{7} z_{13}^{7} z_{14}^{7} z_ {15}^{7} z_{16}^{7}$ \\ $z_1^{2} z_2^{4} z_3^{6} z_4^{7} z_5^{7} z_6^{7} z_7^{7} z_8^{7} z_9^{7} z_ {10}^{7} z_{11}^{7} z_{12}^{7} z_{13}^{7} z_{14}^{7} z_ {15}^{7} z_{16}^{7}$ \end{tabular} & \begin{tabular}{@{}l@{}} $-2 \cdot 911 \cdot 1877 \cdot 691183 \cdot 98867581$ \\ $-2^4 \cdot 7 \cdot 227 \cdot 419 \cdot 1249 \cdot 46499 \cdot 556267$ \end{tabular} \\
\hline
\label{tab:11}
\end{longtable}

\small
\begin{longtable}{lllll}
\caption{Monomials sufficient for the proof of Theorem~\ref{th:alternating} in the case $t=8$, $18 \leq |S| \leq 22$ and~$G=D_{2p}$.}\\
\hline
$k$ & $\ell$ & deg & monomial/s & coefficient/s \\
\hline
$22$ & $16$ & $111$ & \begin{tabular}{@{}l@{}} $z_1^{6} z_2^{7} z_3^{7} z_4^{7} z_5^{7} z_6^{7} z_7^{7} z_8^{7} z_9^{7} z_ {10}^{7} z_{11}^{7} z_{12}^{7} z_{13}^{7} z_{14}^{7} z_ {15}^{7} z_{16}^{7}$ \vspace{0.2cm} \\ $z_1^{7} z_2^{6} z_3^{7} z_4^{7} z_5^{7} z_6^{7} z_7^{7} z_8^{7} z_9^{7} z_ {10}^{7} z_{11}^{7} z_{12}^{7} z_{13}^{7} z_{14}^{7} z_ {15}^{7} z_{16}^{7}$ \end{tabular} & \begin{tabular}{@{}l@{}} $2 \cdot 19 \cdot 73 \cdot 1753 \cdot 1913 \cdot 2927 \cdot 10709$ \\ $16561 \cdot 142433$ \\ $257 \cdot 614556821477 \cdot 4935889852087$ \end{tabular} \\
\hline
$20$ & $16$ & $108$ & \begin{tabular}{@{}l@{}} $z_1^{3} z_2^{7} z_3^{7} z_4^{7} z_5^{7} z_6^{7} z_7^{7} z_8^{7} z_9^{7} z_ {10}^{7} z_{11}^{7} z_{12}^{7} z_{13}^{7} z_{14}^{7} z_ {15}^{7} z_{16}^{7}$ \\ $z_1^{4} z_2^{6} z_3^{7} z_4^{7} z_5^{7} z_6^{7} z_7^{7} z_8^{7} z_9^{7} z_ {10}^{7} z_{11}^{7} z_{12}^{7} z_{13}^{7} z_{14}^{7} z_ {15}^{7} z_{16}^{7}$ \end{tabular} & \begin{tabular}{@{}l@{}} $-20280066499283581979170979$ \\ $-2^4 \cdot 3^3 \cdot 7 \cdot 48091 \cdot 27400883 \cdot 12388512137$ \end{tabular} \\
\hline
$18$ & $16$ & $103$ & \begin{tabular}{@{}l@{}} $z_1^{2} z_2^{3} z_3^{7} z_4^{7} z_5^{7} z_6^{7} z_7^{7} z_8^{7} z_9^{7} z_ {10}^{7} z_{11}^{7} z_{12}^{7} z_{13}^{7} z_{14}^{7} z_ {15}^{7} z_{16}^{7}$ \\ $z_1^{2} z_2^{4} z_3^{6} z_4^{7} z_5^{7} z_6^{7} z_7^{7} z_8^{7} z_9^{7} z_ {10}^{7} z_{11}^{7} z_{12}^{7} z_{13}^{7} z_{14}^{7} z_ {15}^{7} z_{16}^{7}$ \end{tabular} & \begin{tabular}{@{}l@{}} $-2 \cdot 11 \cdot 12541 \cdot 2479011358328177663$ \\ $2 \cdot 3^2 \cdot 113 \cdot 167 \cdot 1953595178092313773$ \end{tabular} \\
\hline
\label{tab:22}
\end{longtable}

\small
\begin{longtable}{llll}
\caption{Monomials for the proof of Theorem \ref{th:alternating} in the case $t=8$, $|S| \geq 24$ and~$G=\mathbb{Z}_p \times \mathbb{Z}_2$.}\label{tab:1}\\
\hline
$\ell$ & deg & monomial/s & coefficient/s \\
\hline
\endhead
$16$ & $112$ & $z_1^{7} z_2^{7} z_3^{7} z_4^{7} z_5^{7} z_6^{7} z_7^{7} z_8^{7} z_9^{7} z_ {10}^{7} z_{11}^{7} z_{12}^{7} z_{13}^{7} z_{14}^{7} z_ {15}^{7} z_{16}^{7}$ & $23 \cdot 964303 \cdot 2069134328306807$\\
\hline
$17$ & $124$ & $z_1^{5} z_2^{6} z_3^{8} z_4^{7} z_5^{8} z_6^{7} z_7^{8} z_8^{7} z_9^{8} z_ {10}^{7} z_{11}^{8} z_{12}^{7} z_{13}^{8} z_{14}^{7} z_ {15}^{8} z_{16}^{7} z_{17}^{8} $ & $-19 \cdot 25169 \cdot 2392881694552215689$\\
\hline
\end{longtable}

\small
\begin{longtable}{llll}
\caption{Monomials for the proof of Theorem \ref{th:alternating} in the case $t=8$, $|S| \geq 24$ and~$G=D_{2p}$.}\label{tab:2}\\
\hline
$\ell$ & deg & monomial/s & coefficient/s \\
\hline
\endhead
$16$ & $112$ & $z_1^{7} z_2^{7} z_3^{7} z_4^{7} z_5^{7} z_6^{7} z_7^{7} z_8^{7} z_9^{7} z_ {10}^{7} z_{11}^{7} z_{12}^{7} z_{13}^{7} z_{14}^{7} z_ {15}^{7} z_{16}^{7}$ & $-46457 \cdot 6420261196031028971393$\\
\hline
$17$ & $124$ & $z_1^{5} z_2^{6} z_3^{8} z_4^{7} z_5^{8} z_6^{7} z_7^{8} z_8^{7} z_9^{8} z_ {10}^{7} z_{11}^{8} z_{12}^{7} z_{13}^{8} z_{14}^{7} z_ {15}^{8} z_{16}^{7} z_{17}^{8} $ & $-3 \cdot 5 \cdot 2699235801551 \cdot 113276824810481$\\
\hline
\end{longtable}
\normalsize
\section{A Ramsey/Probabilistic Approach}
The goal of this section is to prove Theorems \ref{th:main3} and \ref{th:main1}. For this purpose, we will introduce a hybrid Ramsey theoretical and Probabilistic approach and we will apply it, firstly in the case of standard weak sequenceability, and then in the case of balanced set to obtain alternating parity weak sequencings.

First of all, we recall the following Theorem (see \cite{EHR}):
\begin{thm}\label{HappyTheorem}
Given a positive integer $t$, there exists $R^t(l,h)\in \mathbb{N}$ such that, for any $n\geq R^t(l,h)$ and any coloring $c: {[1,\ldots,n]\choose t}\rightarrow \{\mbox{red, black}\}$, there exists a set $T\in {[1,\ldots,n]\choose l}$ such that $c(Z)=\mbox{ red}$ for every $Z \in {T\choose t}$ or there exists $T\in {[1,\ldots,n]\choose h}$ such that $c(Z)=\mbox{ black}$ for every $Z\in {T\choose t}$.
\end{thm}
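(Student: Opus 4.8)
The plan is to prove the statement by induction on the uniformity $t$, following the classical Erd\H{o}s--Rado strategy. For the base case $t=1$, a coloring $c$ of the singletons is just a $2$-coloring of $[1,\ldots,n]$, so by the pigeonhole principle, as soon as $n\geq l+h-1$, one color class contains at least $l$ red or at least $h$ black elements, and that class is the desired set $T$; hence $R^1(l,h)=l+h-1$ works. For the inductive step I would assume that $R^{t-1}(l',h')$ exists for all $l',h'$ and build $R^t(l,h)$ from it.

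The heart of the inductive step is an extraction procedure producing a sequence $x_1,x_2,\ldots,x_m$ together with a nested family of \emph{candidate} sets $A_0\supseteq A_1\supseteq\cdots$. Start with $A_0=[1,\ldots,n]$. Having chosen $x_1,\ldots,x_i$ and a set $A_i$ disjoint from them, I would maintain the invariant that for every $(t-1)$-subset $\{x_{j_1},\ldots,x_{j_{t-1}}\}$ of $\{x_1,\ldots,x_i\}$ and all $u,v\in A_i$ one has $c(\{x_{j_1},\ldots,x_{j_{t-1}},u\})=c(\{x_{j_1},\ldots,x_{j_{t-1}},v\})$; in words, the color of a $t$-set made of $t-1$ already-chosen points plus one point of $A_i$ does not depend on that last point. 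To pass to step $i+1$, pick any $x_{i+1}\in A_i$ and partition $A_i\setminus\{x_{i+1}\}$ according to the colors $c(W\cup\{v\})$, where $W$ ranges over the $\binom{i}{t-2}$ new $(t-1)$-subsets containing $x_{i+1}$; the common refinement of these partitions has at most $2^{\binom{i}{t-2}}$ classes, and I would take $A_{i+1}$ to be a largest one, so that $|A_{i+1}|\geq (|A_i|-1)/2^{\binom{i}{t-2}}$ and the invariant is restored.

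Once $m$ elements have been extracted, the invariant guarantees that the color of any $t$-set $\{x_{i_1},\ldots,x_{i_t}\}$ with $i_1<\cdots<i_t$ depends only on the indices $i_1,\ldots,i_{t-1}$, since $x_{i_t}\in A_{i_{t-1}}$. This defines an induced $2$-coloring $c^*$ of ${[1,\ldots,m]\choose t-1}$, to which I would apply the induction hypothesis: taking $m=R^{t-1}(l,h)$ yields a set of indices, either $l$ of them with all $(t-1)$-subsets red or $h$ of them with all $(t-1)$-subsets black, and the corresponding $x$'s then form a set whose every $t$-subset carries the matching color, which is exactly the required $T$. The main obstacle is purely quantitative: one must pick $n$ large enough that the extraction never runs out of candidates before reaching $m$ steps, and since $|A_{i+1}|$ drops by a factor of order $2^{\binom{i}{t-2}}$ at each step, this forces a tower-type lower bound on $n$. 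Defining $R^t(l,h)$ recursively as the least $n$ guaranteeing $m=R^{t-1}(l,h)$ successful extractions closes the induction; the only care needed lies in checking the shrinkage estimate and that the invariant is preserved at every step, which is routine.
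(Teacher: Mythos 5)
Your proof is correct, and it is the classical Erd\H{o}s--Rado argument: induction on the uniformity $t$ with the pigeonhole base case $R^1(l,h)=l+h-1$, extraction of a sequence $x_1,\ldots,x_m$ with nested candidate sets $A_0\supseteq A_1\supseteq\cdots$ whose invariant forces the color of a $t$-set to depend only on its $t-1$ smallest elements, and application of the inductive hypothesis to the induced coloring $c^*$ of $(t-1)$-subsets of the index set. The invariant is indeed preserved exactly as you describe (subsets avoiding $x_{i+1}$ are handled by nestedness, subsets containing it by taking a largest class of the common refinement, giving $|A_{i+1}|\geq(|A_i|-1)/2^{\binom{i}{t-2}}$), and the points you leave implicit---the recursive choice of $n$ so that $m=R^{t-1}(l,h)$ extraction steps survive the shrinkage, and the harmless off-by-one in defining $c^*$ on $(t-1)$-sets containing the largest index---are genuinely routine. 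The comparison with the paper is lopsided, though: the paper does not prove this statement at all. It is recalled as a known theorem with a citation to Erd\H{o}s--Hajnal--Rado \cite{EHR}, and then used as a black box in the proof of Proposition \ref{Ramsey}. So your argument supplies a self-contained, finitary proof (with the expected tower-type bound on $R^t(l,h)$) of a result the paper treats purely as background; nothing in your route conflicts with the way the theorem is subsequently applied.
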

Then we apply this theorem to prove the following Ramsey-like proposition:
\begin{prop}\label{Ramsey}
Let $S$ be a subset of size $k$ of a \textup{(}not necessarily abelian\textup{)} group $(G, +)$ and let $t$, $\ell$ be positive integers. Then, there exists a constant $k_{t,\ell}$ such that, if $k>k_{t,\ell}$, $S$ contains a subset $T$ whose size is at least $\ell$ that does not admit zero-sum subsets \textup{(}with respect to any ordering\textup{)} of size $h\leq t$.
\end{prop}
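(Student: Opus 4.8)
The plan is to apply the hypergraph Ramsey theorem (Theorem \ref{HappyTheorem}) to a two-colouring of the $t$-subsets of $S$, designed so that a monochromatic set in the favourable colour is exactly the $T$ we want. Write $S = \{s_1, \ldots, s_k\}$ and, identifying a $t$-subset of $S$ with the corresponding $t$-subset of $[1,\ldots,k]$, colour $Z \in \binom{[1,\ldots,k]}{t}$ \emph{red} if $Z$ admits no zero-sum subset (of size $\le t$, with respect to any ordering) and \emph{black} otherwise. Two observations drive the argument. First, a red-homogeneous set $T$ of size $\ell$ (with $\ell \ge t$, which we may assume) is automatically zero-sum-free: if $W \subseteq T$ were a zero-sum subset of size $s \le t$, then, since $|T| \ge t \ge s$, we could extend $W$ to a $t$-subset $Z$ of $T$ containing $W$, and $Z$ would be black, contradicting homogeneity. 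Hence such a $T$ is the desired subset. Second, we must rule out the other Ramsey alternative, i.e. show that black-homogeneous sets cannot be large.

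The heart of the proof, and the step I expect to be the main obstacle, is therefore the bound on black-homogeneous sets: I claim there is a constant $h_0 = h_0(t)$, depending only on $t$ and \emph{not} on the group $G$, such that every subset $U$ of $G$ with $|U| \ge h_0$ contains a red (zero-sum-free) $t$-subset; in particular no black-homogeneous set has size $\ge h_0$. To prove this I would build a zero-sum-free subset of $U$ greedily. Suppose $\{u_1, \ldots, u_j\}$ with $j < t$ has no zero-sum subset. Any zero-sum subset created by adjoining a new element $u$ must contain $u$ together with at most $t-1$ of the $u_i$; for each choice of an ordered tuple of at most $t-1$ of the already chosen elements and of a position into which $u$ is inserted, the zero-sum condition reads $\alpha + u + \beta = 0$ for fixed $\alpha, \beta \in G$, and this pins down $u = (-\alpha) + (-\beta)$ uniquely. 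Here it is essential that we allow arbitrary orderings, since the group may be non-abelian, but each ordering together with an insertion position still forbids a single value of $u$. The number of forbidden values is thus at most $\sum_{s=0}^{t-1} \frac{j!}{(j-s)!}(s+1)$, a quantity bounded by a constant depending only on $t$ once $j < t$. So as long as $|U|$ exceeds this constant plus $j$ we can pick $u_{j+1} \in U$ distinct from $u_1, \ldots, u_j$ and avoiding all forbidden values, and we iterate until we reach a zero-sum-free $t$-subset. Taking $h_0(t)$ to be the resulting threshold gives the claim.

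Finally I would assemble the two pieces through Ramsey. Fix $\ell' = \max(\ell, t)$ and set $h = h_0(t)$. By Theorem \ref{HappyTheorem} there is $R^t(\ell', h)$ such that whenever $k \ge R^t(\ell', h)$ the colouring above yields either a red-homogeneous set of size $\ell'$ or a black-homogeneous set of size $h$. The latter is impossible by the bound on black-homogeneous sets, so we obtain a red-homogeneous set $T$ of size $\ell' \ge \ell$, which by the first observation admits no zero-sum subset of size $\le t$. Setting $k_{t,\ell} = R^t(\ell', h)$ completes the proof. The only genuinely non-routine ingredient is the group-independent bound $h_0(t)$; everything else is bookkeeping and a direct invocation of the stated Ramsey theorem.
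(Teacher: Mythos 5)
Your proof is correct, but it is structured quite differently from the paper's. The paper proceeds by induction on $t$: at stage $t$ it colours a $t$-subset red iff that subset itself sums to zero in some ordering, applies Theorem \ref{HappyTheorem} once per stage, and derives a contradiction in the \emph{red} alternative (inside a red clique of size $t!+t$, a fixed $(t-1)$-subset $Z'$ has at most $t!$ extensions to a zero-sum $t$-set, yet $t!+1$ extensions are available); zero-sum subsets of size $h\le t-1$ are then excluded only because the ambient set $S'$ was already cleaned up by the inductive hypothesis, so the final constant is a $t$-fold nesting $k_{t,\ell}=k_{t-1,R^t(t!+t,\ell)}$. You instead make the colouring hereditary (red $=$ no zero-sum subset of \emph{any} size $h\le t$), which lets a red clique of size $\max(\ell,t)$ inherit the full conclusion with no induction, and you kill the \emph{black} alternative with your greedy lemma: any subset of any group of size at least $h_0(t)\approx t\cdot t!$ contains a zero-sum-free $t$-subset, because each ordered tuple of at most $t-1$ chosen elements together with an insertion position forbids exactly one value of the new element (the same unique-solvability trick the paper uses, but deployed as a construction rather than as a contradiction). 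What each approach buys: yours uses a single application of Ramsey, so $k_{t,\ell}=R^t(\max(\ell,t),h_0(t))$ rather than an iterated tower of Ramsey numbers, a genuinely better bound, and the greedy lemma (large sets in arbitrary groups contain zero-sum-free $t$-sets) is a clean statement of independent interest; the paper's inductive scheme is what generalizes directly to its balanced/alternating-parity variant (Proposition \ref{Ramsey2}), where the parity constraints make a one-shot hereditary colouring less convenient.
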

\proof
We proceed by induction on $t$.

BASE CASE, $t=1$. Here we have that, if $|S|\geq \ell+1$, then $S'=S\setminus \{0\}$ has size at least $\ell$. Since $S'$ does not contain $0$, any subset of size $1$ of $S'$ does not sum to zero.

INDUCTIVE STEP. Due to the inductive hypothesis, if the size of $S$ is bigger than $k_{t-1,R^t(t!+t,\ell)}$, $S$ contains a subset $S'$ whose size is at least $R^t(t!+t,\ell)$ and that does not admit zero-sum subset (with respect to any ordering) of size $h\leq t-1$.

Now we consider the $t$-uniform hypergraph given by the $t$-subsets of $S'$ (i.e. the elements of ${S' \choose t}$). Given $Z=\{y_1,\ldots,y_{t}\}\in {S' \choose t}$, we color $Z$ in red if it admits an ordering $\sigma$ such that $y_{\sigma(1)}+y_{\sigma(2)}+\cdots + y_{\sigma(t)}=0$ and in black otherwise. Due to Theorem \ref{HappyTheorem}, since $|S'|\geq R^t(t!+t,\ell)$, $S'$ has either a subset $T$ of size $t!+t$ whose subsets of size $t$ all sum to zero with respect to some ordering (i.e. are red) or a subset $T$ of size $\ell$ whose subsets of size $t$ have all non-zero-sum (i.e. are black).
Let us assume we are in the first case and let us consider a subset $Z'=\{y_1,\ldots,y_{t-1}\}$ of $T$ of size $t-1$. We note that, for any permutation of $Z'$, and for any position $h$ of $x$ the following equation admits only one solution $x\in G$:
$$y_{\sigma(1)}+\cdots +y_{\sigma(h-1)}+x+y_{\sigma(h)}+\cdots+y_{\sigma(t-1)}=0.$$
It follows that there are at most $t!$ ways to extend $Z'$ to a zero-sum set $B$ of size $t$. Since $|T|=|Z'|+t!+1 = t! + t$, $T$ contains $t!+1$ extensions of $Z'$ to subsets of size $t$, and one of these extensions must have a non-zero-sum (with respect to any ordering). But this contradicts the assumption that we are in the first case.

Therefore, we have that $S'$ contains a subset $T$ of size $\ell$ whose subsets of size $t$ have all non-zero-sum (with respect to any ordering). Because of the inductive hypothesis, also all the subsets of $T$ of size $h\leq t-1$ have non-zero sum and hence the thesis is verified.
\endproof
We recall that, in \cite{CD}, it was stated\footnote{Actually, in \cite{CD},  the theorem was stated only in case $G=\mathbb{Z}_n$ but Proposition \ref{fix3} can be obtained with the same proof.} that:
\begin{prop}\label{fix3}
Let $S=\{y_1,\ldots,y_k\}\subseteq G\setminus\{0\}$ be a set of size $k$ and let $h$ and $t$ be positive integers such that $h\leq k-(t-1)$. Then there is an ordering of $h$-elements of $S$ that we denote, up to relabeling, with $(y_1,\ldots,y_h)$, such that, for any ${0\leq i<j\leq \min(h,i+t)}$
$$s_i=y_1+y_2+\cdots+y_i\not=y_1+y_2+\cdots+y_j=s_j.$$\end{prop}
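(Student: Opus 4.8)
The plan is to construct the ordering one term at a time by a greedy induction on $h$, exactly in the spirit of the proof of Proposition~\ref{fix2}; the only structural input needed is that $G$ is a group, so the argument will apply verbatim in the non-abelian case. The key reformulation I would use is that, writing $s_0 = 0$ and $s_j = y_1 +_G \cdots +_G y_j$, one has $s_i = s_j$ if and only if $y_{i+1} +_G \cdots +_G y_j = 0$; since $s_j = s_i +_G (y_{i+1} +_G \cdots +_G y_j)$ this equivalence holds also when $G$ is non-abelian. In particular the window condition ``$s_i \neq s_j$ for all $0 \le i < j \le \min(h, i+t)$'' only ever compares a partial sum with the at most $t$ partial sums immediately preceding it.

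First I would dispose of the base case $h = 1$, where the only comparison is $s_0 \neq s_1$, i.e.\ $y_1 \neq 0$, which holds because $S \subseteq G \setminus \{0\}$. For the inductive step, suppose $(y_1, \ldots, y_m)$ with $m + 1 \le h$ has already been chosen so that the window condition holds among $s_0, \ldots, s_m$. Appending a new element $y_{m+1} = x$ creates the partial sum $s_{m+1} = s_m +_G x$, and the only new comparisons are $s_{m+1} \neq s_i$ for $\max(0, m+1-t) \le i \le m$. The comparison with $i = m$ reads $x \neq 0$, which is free. For each of the remaining indices $i$ the equation $s_m +_G x = s_i$ has the unique solution $x = (-s_m) +_G s_i$ in $G$, so each such $i$ forbids at most one value of $x$; a direct count shows there are at most $t-1$ such indices. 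Hence at most $t-1$ elements of $S \setminus \{y_1, \ldots, y_m\}$ are forbidden.

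It then remains to check availability: the number of still-unused elements is $k - m$, and since $m \le h - 1 \le k - t$ follows from the hypothesis $h \le k - (t-1)$, we have $k - m \ge t > t - 1$. Therefore a legal choice of $y_{m+1}$ always exists, and the induction closes.

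I do not expect a genuine obstacle in this argument: it is essentially bookkeeping, and the whole point is that the bound $h \le k-(t-1)$ is calibrated precisely so that the number of forbidden elements (at most $t-1$) stays strictly below the number of available elements ($k-m \ge t$) at every step, including the final one where $m = k-t$ and equality is tight. The only points deserving care are the reduction to a one-sided window of length $t$ and the verification that the unique-solution count $t-1$ is never exceeded near the beginning of the ordering (when $m+1-t < 0$ there are even fewer constraints), both of which I would state explicitly rather than leave implicit.
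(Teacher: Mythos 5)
Your proof is correct and takes essentially the same route as the paper's: the paper defers to Proposition 2.2 of \cite{CD} (noting in a footnote that the identical proof works in any group), and that proof is exactly this greedy induction, where at each step the unique solvability of $s_m +_G x = s_i$ forbids at most $t-1$ values while the hypothesis $h \leq k-(t-1)$ guarantees at least $t$ unused elements of $S$ remain. The only cosmetic difference is that the paper's rendition of this argument (compare the proof of Proposition \ref{fix2}) is phrased as non-vanishing of a product polynomial rather than directly in terms of partial sums, but the counting is identical.
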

Then, using a probabilistic approach, we are able to prove Theorem \ref{th:main1}:
\begin{thm*2}
Let $(G,  +)$ be a group of size $n$. Then subsets of size~$k$ of~$G\setminus \{0\}$ are $t$-weakly sequenceable whenever $k$ is large enough with respect to $t$.
\end{thm*2}
\proof
Let $S$ be a subset of size $k$ of $G\setminus \{0\}$. According to Proposition \ref{Ramsey}, however we choose $\ell$, if we assume that $k$ is large enough, there exists a subset $T$ of $S$ whose size is $\ell$ and that does not admit zero-sum subset of size $h\leq t$. Let $U=S\setminus T$ and let $k'=k-\ell$ be the size of $U$. According to Proposition \ref{fix3}, given $h= k'-(t-1)=k-\ell-(t-1)$, we can order $h$ elements of $U$, namely $(y_1,\ldots,y_h)$, in such a way that, for any ${0\leq i<j\leq \min(h,i+t)}$
$$s_i=y_1+y_2+\cdots+y_i\not=y_1+y_2+\cdots+y_j=s_j.$$
We denote by $U'$ the set $\{y_1,\ldots,y_h\}$.
Now it suffices to order the remaining $t-1$ elements of $U$ and the $\ell$ elements of $T$. Here we choose, uniformly at random an ordering $y_1,y_2,\ldots,y_h,z_{h+1},\ldots, z_{k}$ that extend the one of $U'$. Let $X$ be the random variable that represents the number of pairs $(i, j)$ such that $s_i = s_j$ with $0\leq i<j \leq k$ and $j - i \leq t$. We evaluate the expected value of $X$.
Because of the linearity of the expectation, we have that:
\begin{equation}\label{E1}\mathbb{E}(X)=\sum_{\substack{0\leq i<j \leq h\\j - i \leq t}} \mathbb{P}(s_i=s_j)+\sum_{\substack{0\leq i\leq h<j\leq k\\j - i \leq t}} \mathbb{P}(s_i=s_j)+\sum_{\substack{h< i<j \leq k\\j - i \leq t}} \mathbb{P}(s_i=s_j).\end{equation}
Due to the choice of $U'$, when $0\leq i<j \leq h$, the probability $\mathbb{P}(s_i=s_j)=0$. Assuming that $0\leq i\leq h<j\leq k$,
since the equation
$$s_i=y_1+y_2+\cdots+y_i =y_1+y_2+\cdots+y_{h}+z_{h+1}+\cdots+z_{j-1}+x=s_j$$ admits only one solution, $\mathbb{P}(s_i=s_j)$ is smaller than or equal to $\frac{1}{|S\setminus (U'\cup\{z_{h+1},\ldots,z_{j-1}\})|}\leq 1/\ell.$ Here the last inequality holds because $j$ is at most $h+t$ and hence the cardinality of $|S\setminus (U'\cup\{z_{h+1},\ldots,z_{j-1}\})|$ is at least $k-h-(t-1)=\ell$.
It follows that Equation \eqref{E1} can be rewritten as
\begin{equation}\label{E2}\mathbb{E}(X)\leq 0+\frac{t^2}{\ell}+\sum_{\substack{h< i<j \leq k\\j - i \leq t}} \mathbb{P}(s_i=s_j).\end{equation}
It is easy to see that
$$\sum_{\substack{h< i<j \leq k\\j - i \leq t}} \mathbb{P}(s_i=s_j)=\sum_{\substack{h< i<j \leq k\\j - i \leq t}} \mathbb{P}(z_{i+1}+z_{i+2}+\cdots+z_j=0).$$
We divide the evaluation of this probability in two cases according to whether $\{z_{i+1},$ $z_{i+2}, \ldots, z_j\}\subseteq T$ or not. In the second case, since at least one of the elements $z_{i+1},$ $z_{i+2},\ldots,z_j$ is not in $T$, we can assume that $z_{i+1}\not \in T$ and since $z_{i+1}\not=0$ we can assume $j>i+1$. Reasoning as before we obtain that, under these assumptions, the probability of $z_{i+1}+z_{i+2}+\cdots+z_j=0$ is at most $\frac{1}{\ell}$.
Therefore we have that:
$$\mathbb{P}(z_{i+1}+z_{i+2}+\cdots+z_j=0)\leq \frac{1}{\ell}\mathbb{P}(\{z_{i+1},z_{i+2},\cdots,z_j\}\not\subseteq T)+$$ $$\mathbb{P}(z_{i+1},z_{i+2},\cdots,z_j\in T)\mathbb{P}(z_{i+1}+z_{i+2}+\cdots+z_j=0|z_{i+1},z_{i+2},\cdots,z_j\in T)$$
and, since $z_{i+1}+z_{i+2}+\cdots+z_j\not=0$ whenever $z_{i+1},z_{i+2},\cdots,z_j\in T$,
$$\mathbb{P}(z_{i+1}+z_{i+2}+\cdots+z_j=0)\leq \frac{1}{\ell}\mathbb{P}(\{z_{i+1},z_{i+2},\cdots,z_j\}\not\subseteq T).$$
Since $z_{i+1},z_{i+2},\ldots,z_j$ are at most $t$ elements randomly chosen in $S\setminus U'$, the probability that all of them are contained in $T$ is at least $$\frac{{|T|\choose j-i}}{{|S\setminus U'|\choose j-i}}\geq\frac{{|T|\choose t}}{{|S\setminus U'|\choose t}} = \frac{{\ell \choose t}}{{\ell+t\choose t}}\geq \frac{(\ell-(t-1))^{t}}{(\ell+t)^{t}}.$$
It follows that
$$\mathbb{P}(z_{i+1}+z_{i+2}+\cdots+z_j=0)\leq \frac{1}{\ell}\left(1-\frac{(\ell-(t-1))^{t}}{(\ell+t)^{t}}\right)$$
and hence Equation \eqref{E2} becomes
\begin{equation}\label{E3}\mathbb{E}(X)\leq \frac{t^2}{\ell}+\frac{1}{\ell}\sum_{\substack{h< i<j \leq k\\j - i \leq t}} \left(1-\frac{(\ell-(t-1))^{t}}{(\ell+t)^{t}}\right).\end{equation}
Finally, since there are less than $(\ell+t)t$ pairs $(i,j)$ such that $h< i<j \leq k$ and $j - i \leq t$, from Equation \eqref{E3} we obtain
\begin{equation}\label{E4}\mathbb{E}(X)\leq \frac{t^2}{\ell}+\frac{t(\ell+t)}{\ell}\left(1-\frac{(\ell-(t-1))^{t}}{(\ell+t)^{t}}\right).\end{equation}
Here we note that the right-hand side of the last inequality goes to zero as $\ell$ goes to infinite. Therefore for $\ell$ large enough, or more precisely for $\ell\geq\bar{\ell}$ for a suitable $\bar{\ell}\in \mathbb{N}$, we have that $\mathbb{E}(X)<1$. This means that, if $k$ is large enough (i.e. if $k>k_{t,\bar{\ell}}$) there exists an ordering on which $X=0$ that is, there exists a $t$-weak sequencing of $S$.
\endproof
\subsection{Revisiting the Probabilistic Approach}
In this section, we will adapt the previous approach to obtain alternating parity $t$-weak sequencings.
First of all, we need to prove a modified version of the Ramsey theoretical Proposition \ref{Ramsey}:
\begin{prop}\label{Ramsey2}
Let $S$ be a subset of type $(k/2,k/2)$ of the group $G=N\rtimes_{\varphi}\Z_2$. Then, given an even value of $\ell$, there exists a constant $\bar{k}_{t,\ell}$ such that, if $k>\bar{k}_{t,\ell}$, $S$ contains a subset $T$ of type $(\ell/2,\ell/2)$ that does not admit subsets of size $h\leq t$ with alternating parity zero-sum sequencings.
\end{prop}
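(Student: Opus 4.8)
The plan is to mimic the inductive proof of Proposition \ref{Ramsey} but with two modifications forced by the alternating-parity setting. First, I would replace the single hypergraph of $t$-subsets by a structure that remembers the parity of each element, since now a ``bad'' configuration is not any zero-sum ordering of a $t$-subset but only an \emph{alternating parity} zero-sum sequencing. Accordingly, throughout the induction I will track subsets $T$ that are themselves balanced (of type $(m/2,m/2)$ for the appropriate size $m$) and in which no subset of size $h\leq t$ admits an alternating parity zero-sum sequencing. The base case $t=1$ is immediate exactly as before: removing $(0,0)$ (which is not in $S$ anyway, or in any case removing at most the identity) from each parity class leaves a balanced set of any prescribed even size $\leq k$ none of whose singletons sum to zero.

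For the inductive step I would, as in Proposition \ref{Ramsey}, first invoke the hypothesis for $t-1$ to extract a large balanced subset $S'$ with no alternating parity zero-sum sequencing of any size $h\leq t-1$; its target size should be $R^t(C_t,\ell)$ for a constant $C_t$ determined below. Then I color each $t$-subset $Z\in\binom{S'}{t}$ \emph{red} if it admits \emph{some} alternating parity ordering summing to zero and \emph{black} otherwise, and apply Theorem \ref{HappyTheorem}. A monochromatic black set of size $\ell$ is exactly what we want (after checking it can be chosen balanced, see below), so the work is to rule out the red monochromatic set of size $C_t$. Here I would extend a size-$(t-1)$ subset $Z'$ in all admissible ways: for each choice of an alternating-parity pattern and each insertion position, the equation determining the new element $x$ has at most one solution in $G$, so the number of extensions of $Z'$ that close up into an alternating parity zero-sum $t$-set is bounded by a constant $c_t$ (roughly $t!$ times a factor counting parity patterns and positions). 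Choosing $C_t$ larger than $c_t+(t-1)$ guarantees an extension that stays black, contradicting redness.

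Two bookkeeping points need care, and I expect these to be the main obstacle rather than any single computation. The first is \emph{parity balance}: Theorem \ref{HappyTheorem} produces a monochromatic set with no control over how its $\ell$ elements split between the two cosets $N\rtimes_\varphi\{0\}$ and $N\rtimes_\varphi\{1\}$, whereas the conclusion demands type $(\ell/2,\ell/2)$. I would handle this by running the Ramsey argument on each parity class separately, or equivalently by inflating the target sizes so that the larger monochromatic set is guaranteed to contain $\ell/2$ even and $\ell/2$ odd elements (a pigeonhole step), and then taking a balanced sub-subset; since any subset of a set with no alternating parity zero-sum $h$-subset inherits the same property, this restriction is harmless. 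The second point is that an alternating parity zero-sum \emph{sequencing} constrains which orderings are even legal, so in the extension count I must only consider insertion positions and patterns that keep the parities alternating; this in fact \emph{reduces} the number of extensions, so the bound $c_t$ is no worse than in the non-alternating case. Assembling these, one obtains the recursive definition $\bar k_{t,\ell}=\bar k_{t-1,R^t(C_t,\,2\ell)}$ (with the factor $2\ell$ absorbing the pigeonhole loss), completing the induction.
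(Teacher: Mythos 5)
Your plan follows the shape of Proposition \ref{Ramsey} (induct on $t$, colour $t$-subsets red/black according to whether they admit an alternating parity zero-sum ordering, kill the red clique by the one-solution extension count), but it breaks at exactly the point you yourself flagged: producing a \emph{balanced} black set. Theorem \ref{HappyTheorem} gives no control over how the monochromatic set is split between the cosets $N\rtimes_\varphi\{0\}$ and $N\rtimes_\varphi\{1\}$, and no amount of inflation plus pigeonhole can restore it, because the even class is itself black-monochromatic for your colouring: a $t$-subset consisting entirely of even elements (for $t\geq 2$) admits no alternating parity ordering at all, hence certainly no alternating parity zero-sum ordering. So Ramsey's theorem may legitimately return a black set lying wholly inside $S'\cap(N\rtimes_\varphi\{0\})$, however large you take the target size, and from such a set no subset of type $(\ell/2,\ell/2)$ can be extracted. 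Your alternative fix, ``running the Ramsey argument on each parity class separately,'' fails for the complementary reason: restricted to a single class the colouring is vacuously all black, so it certifies nothing about mixed $t$-subsets, and the union of two per-class black sets can perfectly well contain an alternating parity zero-sum set of size $t$ (which necessarily mixes the classes).

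The paper's proof avoids this by coupling the two classes \emph{inside} the colouring rather than repairing balance after the fact. It first fixes an arbitrary set $T'$ of type $(\ell/2,0)$ (the even half of the future $T$), observes that an alternating parity zero-sum set of size $t$ must contain exactly $t'$ odd and $t-t'$ even elements (with $t'$ determined by $t \bmod 4$; the case $t\equiv 2\pmod 4$ is trivial by a parity obstruction), and then colours the $t'$-subsets $Z$ of the \emph{odd} elements of $S'$: red if some $Z'\subseteq T'$ with $|Z'|=t-t'$ completes $Z$ to an alternating parity zero-sum, black otherwise. Ramsey's theorem is applied to this $t'$-uniform hypergraph on the odd class only; the red clique is excluded by the same extension count you describe (now carrying an extra factor $\binom{\ell/2}{t-t'}$ for the choice of $Z'$), and the resulting black clique of size $\ell/2$, together with the pre-fixed $T'$, is exactly the desired balanced $T$. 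The missing idea in your proposal is this ``fix one side first, colour the other side relative to it'' device; without it, or some substitute such as a bipartite/product Ramsey theorem, the balance requirement in the conclusion cannot be met.
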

\proof
Now we apply an inductive idea similar to that of Proposition \ref{Ramsey}.
Here we can assume that $S$ contains a subset $S'$ of type $(k'/2,k'/2)$, where $k'$ is arbitrarily large, that does not admit subsets of size $h< t$ with alternating parity zero-sum sequencings.

Then, we note that, if $t\equiv 2 \pmod{4}$, any balanced set of size $t$ does not sum to zero (in any ordering). Otherwise we set $$ t'=\begin{cases}
t/2  & \text{if } t\equiv 0 \pmod{4};\\
\lfloor t/2 \rfloor & \text{if } t\equiv 1 \pmod{4};\\
\lceil t/2\rceil & \text{if } t\equiv 3 \pmod{4}.
\end{cases}
$$
Now we fix a subset $T'$ of $S'$ of type $(\ell/2,0)$ and we want to apply Theorem \ref{HappyTheorem} to the extensions of $T'$ with elements of $S'\cap (N\rtimes_{\varphi}\{1\})$. We note that a subset of $S'$ of size $t$ admits an alternating parity zero-sum sequencing only if it has $t'$ elements in $S' \cap (N\rtimes_{\varphi}\{1\})$ and $t-t'$ in $S' \cap (N\rtimes_{\varphi}\{0\})$.

Then we color a subset $Z\subseteq S'\cap (N\rtimes_{\varphi}\{1\})$ of size $t'$ in red if there exists $Z'\subseteq T'$ whose size is $t-t'$ such that the sum of the elements of $Z$ and $Z'$ is zero in some alternating parity order. Otherwise, we color $Z$ in black. Following the reasoning of Proposition \ref{Ramsey}, if $k'/2$ is large enough, we end up with a black, monochromatic complete $t'$-uniform hypergraph of size $\ell/2$.

It follows that there exists a constant $\bar{k}_{t,\ell}$ such that, if $k>\bar{k}_{t,\ell}$, $S$ contains a subset $T$ of type $(\ell/2,\ell/2)$ that does not admit subsets of size $h\leq t$ with alternating parity zero-sum sequencings.
\endproof
We also recall that with essentially the same proof of Proposition \ref{fix2} we obtain that
\begin{prop}\label{fix5}
Let $S \subseteq G\setminus \{(0,0)\}$ be a set of size $k$ and type $(k/2,k/2)$ and let $h$ and $t$ be positive integers such that $k-h\geq t+2$ and $h\geq t-1$. Then there is an alternating parity ordering of $h$-elements of $S$ that we denote with $(y_1, y_2, \ldots, y_h)$, such that, for any ${0\leq i<j\leq \min(h,i+t)}$
$$s_i=y_1+y_2+\cdots+y_i\not=y_1+y_2+\cdots+y_j=s_j.$$
\end{prop}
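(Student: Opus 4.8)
The plan is to mirror, essentially verbatim, the inductive (greedy) argument used in the proof of Proposition~\ref{fix2}, but phrased directly in terms of distinct partial sums rather than non-vanishing polynomials, and to observe that the only property of $\Z_p$ actually invoked is the uniqueness of solutions to a one-variable equation, which holds in any group. Concretely, I would set $S' = S \cap (N \rtimes_{\varphi} \{0\})$ (the even elements) and $S'' = S \setminus S'$ (the odd ones), and prove by induction on $h$ that one can pick an alternating parity ordering $(y_1, \ldots, y_h)$, with $y_i \in S'$ for $i$ odd and $y_i \in S''$ for $i$ even, whose partial sums satisfy $s_i \ne s_j$ for all $0 \le i < j \le \min(h, i+t)$. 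For the base case $h = 1$ any even element works, since $s_1 = y_1 \ne (0,0) = s_0$ because $(0,0) \notin S$.

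For the inductive step, suppose $(y_1, \ldots, y_m)$ has already been chosen and we wish to append $y_{m+1}$, which must lie in the coset of parity $(m)_2$. Since $s_{m+1} = s_m +_G y_{m+1}$ and $s_m$ is fixed, a collision $s_{m+1} = s_i$ forces $y_{m+1}$ to equal the single element $(-s_m) +_G s_i$; this forced value has the correct parity $(m)_2$ precisely when $b_i = b_{m+1}$, and otherwise is not even a legitimate candidate. This is the one place I would emphasize that, unlike in Proposition~\ref{fix2}, I use only the uniqueness of the group inverse rather than the field structure of $\Z_p$, so the argument applies to an arbitrary $N$. Consequently, for each index $i$ in the relevant window $\max(0, m+1-t) \le i \le m-1$ with $b_i = b_{m+1}$ there is exactly one forbidden value of $y_{m+1}$, while the consecutive case $j = i+1$ never produces a collision because $y_{m+1} \ne (0,0)$.

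It then remains to count, exactly as in the proof of Proposition~\ref{fix2}, how many indices in the window satisfy $b_i = b_{m+1}$. Using the period-$4$ pattern $0,0,1,1$ of the sequence $(b_i)$, one sees there are at most $\lceil t/2 \rceil$ such relations, hence at most $\lceil t/2 \rceil$ forbidden elements. On the other hand, the number of still-unused elements of the correct parity is at least $k/2 - (m-1)/2 \ge \lfloor t/2 \rfloor + 2$, where the inequality uses $k - m \ge t + 3$ (which follows from $k - h \ge t+2$ with $h = m+1$). Since $\lfloor t/2 \rfloor + 2 > \lceil t/2 \rceil$, a valid choice of $y_{m+1}$ always exists, completing the induction.

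The main obstacle — in fact the only non-routine point — is the bookkeeping that at most $\lceil t/2 \rceil$ relations $b_i = b_{m+1}$ occur inside the length-$t$ window, together with the matching supply inequality $\lfloor t/2 \rfloor + 2 > \lceil t/2 \rceil$ on the available same-parity elements. Both estimates, however, are identical to the ones already established in the proof of Proposition~\ref{fix2}, so nothing genuinely new needs to be verified; this is exactly the sense in which the statement holds by ``essentially the same proof.''
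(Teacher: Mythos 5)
Your proposal is correct and matches the paper's intent exactly: the paper gives no separate proof of Proposition~\ref{fix5}, stating only that it follows ``with essentially the same proof of Proposition~\ref{fix2}'', and your write-up is precisely that adaptation, with the right observation that only uniqueness of solutions to $s_m +_G y_{m+1} = s_i$ (not the field structure of $\Z_p$) is needed, so the argument works for arbitrary $N$. The counting of at most $\lceil t/2\rceil$ forbidden values against a supply of at least $\lfloor t/2\rfloor + 2$ same-parity elements is identical to the paper's.
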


Then, following the line of Theorem \ref{th:main1}, we can prove that
\begin{thm*}
Let $G=N\rtimes_{\varphi} \Z_2$ be a group of size $n$. Then subsets of type $(k/2,k/2)$ of~$G\setminus \{(0,0)\}$ admit an alternating parity $t$-weak sequencing whenever $k$ is large enough with respect to $t$.
\end{thm*}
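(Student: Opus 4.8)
The plan is to follow, essentially verbatim, the structure of the proof of Theorem~\ref{th:main1}, replacing its two generic ingredients by their balanced, alternating-parity counterparts: Proposition~\ref{Ramsey2} in place of Proposition~\ref{Ramsey}, and Proposition~\ref{fix5} in place of Proposition~\ref{fix3}. Fix an even $\ell$, to be chosen later. First I would invoke Proposition~\ref{Ramsey2}: provided $k$ is large enough, $S$ contains a subset $T$ of type $(\ell/2,\ell/2)$ admitting no subset of size $h\le t$ with an alternating-parity zero-sum sequencing. Set $U=S\setminus T$, which is again balanced, of size $k'=k-\ell$, and apply Proposition~\ref{fix5} to $U$ with $h=k'-(t+2)$ (so that $k'-h\ge t+2$ and $h\ge t-1$ hold once $k$ is large). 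This yields an alternating-parity ordering $(y_1,\dots,y_h)$ of a subset $U'\subseteq U$ whose partial sums $s_i$ are pairwise distinct whenever $0\le i<j\le\min(h,i+t)$.

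Next I would complete the ordering at random, respecting parity. The positions $h+1,\dots,k$ come with a prescribed parity pattern, so I place the remaining even elements of $S\setminus U'$ uniformly at random among the even-parity slots and the remaining odd elements uniformly at random among the odd-parity slots, the two permutations being chosen independently. Let $X$ count the pairs $(i,j)$ with $0\le i<j\le k$, $j-i\le t$, and $s_i=s_j$. By linearity of expectation I split $\mathbb{E}(X)$ exactly as in~\eqref{E1}, into the ranges $0\le i<j\le h$, then $0\le i\le h<j\le k$, and finally $h<i<j\le k$.

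The first range contributes $0$ by the choice of $U'$ via Proposition~\ref{fix5}. In the second range the equation $s_i=s_j$ pins down the single free element sitting at position $j$; since that element ranges over the available elements of its fixed parity, a pool of size at least $\ell/2-O(t)$, each probability is $O(1/\ell)$, and with $O(t^2)$ such pairs this range contributes $O(t^2/\ell)$. The crucial third range is where Proposition~\ref{Ramsey2} enters. Here $\mathbb{P}(s_i=s_j)=\mathbb{P}(z_{i+1}+\dots+z_j=0)$, which is automatically $0$ unless $b_i=b_j$, i.e.\ unless the contiguous block is parity-balanced. For such a block, if all its elements happen to lie in $T$, then its induced ordering is an alternating-parity ordering of a subset of $T$ of size $\le t$, so by Proposition~\ref{Ramsey2} its sum is nonzero and the conditional probability is $0$. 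For blocks not entirely contained in $T$ I would isolate one free element of fixed parity and bound the conditional sum-zero probability by $O(1/\ell)$, giving the per-pair bound $\frac{C}{\ell}\bigl(1-\mathbb{P}(\{z_{i+1},\dots,z_j\}\subseteq T)\bigr)$. A hypergeometric estimate applied separately to the even and odd classes shows $1-\mathbb{P}(\{z_{i+1},\dots,z_j\}\subseteq T)=O(t^2/\ell)$, so, with fewer than $(\ell+t+2)t$ pairs in this range, the whole contribution is $O(t^3/\ell)$. Summing, $\mathbb{E}(X)\to 0$ as $\ell\to\infty$; choosing $\ell$, and hence $k$, large enough to force $\mathbb{E}(X)<1$ yields an ordering with $X=0$, that is, an alternating-parity $t$-weak sequencing.

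I expect the main obstacle to be the bookkeeping that links the probabilistic estimates to the Ramsey input, rather than any single hard inequality. Concretely, one must check that a contiguous sub-block of the random alternating ordering which falls inside $T$ is genuinely an \emph{alternating-parity} subsequence of $T$ of size at most $t$, so that Proposition~\ref{Ramsey2} applies, and that only the parity-balanced blocks (those with $b_i=b_j$) can ever sum to zero, which is precisely what makes the parity-tailored Ramsey statement suffice. The secondary care is to track the correct parity pools (of size $\approx\ell/2$ rather than $\ell$) and to split the event ``block entirely in $T$'' into its even and odd parts when estimating $1-\mathbb{P}(\{z_{i+1},\dots,z_j\}\subseteq T)$; these introduce only harmless constant factors, but they must be carried consistently for the final bound to tend to zero.
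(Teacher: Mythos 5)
Your proposal is correct and follows essentially the same route as the paper's own proof: Proposition~\ref{Ramsey2} supplies the balanced Ramsey set $T$, Proposition~\ref{fix5} fixes the prefix $U'$, and a uniformly random alternating-parity completion is analyzed by splitting $\mathbb{E}(X)$ exactly as in~\eqref{E1}, with the parity-restricted pools giving the $O(1/\ell)$ per-pair bounds and the separate even/odd hypergeometric estimate handling blocks inside $T$. The only cosmetic difference is that the paper first reduces to even $t$ to keep the binomial-coefficient bound clean, a simplification your $O(\cdot)$ bookkeeping absorbs anyway.
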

\proof
In the following, we will assume, without loss of generality, that $t$ is even. Indeed, if the thesis is satisfied for a given value of $t$ then it holds also for smaller ones.
Let $S$ be a subset of type $(k/2,k/2)$ of $G\setminus \{(0,0)\}$. According to Proposition \ref{Ramsey2}, however we choose an even $\ell$, if we assume that $k$ is large enough, there exists a subset $T$ of $S$ of type $(\ell/2,\ell/2)$ that does not admit subsets of size $h\leq t$ with alternating parity zero-sum sequencings.

Let us set $U=S\setminus T$ and let $k'=k-\ell$ be the size of $U$. According to Proposition \ref{fix5}, given $h=k'-(t+2)=k-\ell-(t+2)$, we can provide an alternating parity sequencing to $h$ elements of $U$, namely $(y_1,\ldots,y_h)$, in such a way that, for any ${0\leq i<j\leq \min(h,i+t)}$
$$s_i=y_1+y_2+\cdots+y_i\not=y_1+y_2+\cdots+y_j=s_j.$$
We denote by $U'$ the set $\{y_1,\ldots,y_h\}$.
Now it suffices to order the remaining $t+2$ elements of $U$ and the $\ell$ elements of $T$. Here we choose, uniformly at random an alternating parity ordering $y_1,y_2,\ldots,y_h,$ $z_{h+1},\ldots, z_{k}$ that extends the one of $U'$. Let $X$ be the random variable that represents the number of pairs $(i, j)$ such that $s_i = s_j$ with $0\leq i<j \leq k$ and $j - i \leq t$. We evaluate the expected value of $X$.
Because of the linearity of the expectation, we have that:
\begin{equation}\label{EB1}\mathbb{E}(X)=\sum_{\substack{0\leq i<j \leq h\\j - i \leq t}} \mathbb{P}(s_i=s_j)+\sum_{\substack{0\leq i\leq h<j\leq k\\j - i \leq t}} \mathbb{P}(s_i=s_j)+\sum_{\substack{h< i<j \leq k\\j - i \leq t}} \mathbb{P}(s_i=s_j).\end{equation}
Due to the choice of $U'$, when $0\leq i<j \leq h$, the probability $\mathbb{P}(s_i=s_j)=0$. Assuming that $0\leq i\leq h<j\leq k$,
since the equation
$$s_i=y_1+y_2+\cdots+y_i = y_1+y_2+\cdots+y_{h}+z_{h+1}+\cdots+z_{j-1}+x=s_j$$ admits only one solution and the coset of $x$ is $N\rtimes_{\varphi}{(j-1)_2}$, $\mathbb{P}(s_i=s_j)$ is smaller than or equal to $\frac{2}{|S\setminus (U'\cup\{z_{h+1},\ldots, z_{j-1}\}) |-1} \leq \frac{2}{\ell+2}$ because $j$ is at most $h+t$. It follows that Equation \eqref{EB1} can be written as
\begin{equation}\label{EB2}\mathbb{E}(X)\leq 0+\frac{2 t^2}{\ell+2}+\sum_{\substack{h< i<j \leq k\\j - i \leq t}} \mathbb{P}(s_i=s_j).\end{equation}
It is easy to see that
$$\sum_{\substack{h< i<j \leq k\\j - i \leq t}} \mathbb{P}(s_i=s_j)=\sum_{\substack{h< i<j \leq k\\j - i \leq t}} \mathbb{P}(z_{i+1}+z_{i+2}+\cdots+z_j=0).$$
Reasoning as before we have that, if at least one $z_{i+1},z_{i+2},\ldots,z_j$ is not in $T$, the probability that $z_{i+1}+z_{i+2}+\cdots+z_j=0$ is at most $\frac{2}{\ell+2}$.
Therefore we have that:
$$\mathbb{P}(z_{i+1}+z_{i+2}+\cdots+z_j=0)\leq \frac{2}{\ell+2}\mathbb{P}(\{z_{i+1},z_{i+2},\cdots,z_j\}\not\subseteq T)+$$ $$\mathbb{P}(z_{i+1},z_{i+2},\ldots,z_j\in T)\mathbb{P}(z_{i+1}+z_{i+2}+\cdots+z_j=0|z_{i+1},z_{i+2},\ldots,z_j\in T)$$
and, since $z_{i+1}+z_{i+2}+\cdots+z_j\not=0$ whenever $z_{i+1},z_{i+2},\ldots,z_j\in T$,
$$\mathbb{P}(z_{i+1}+z_{i+2}+\cdots+z_j=0)\leq \frac{2}{\ell+2}\mathbb{P}(\{z_{i+1},z_{i+2},\ldots,z_j\}\not\subseteq T).$$

Since $z_{i+1},z_{i+2},\ldots,z_j$ are at most $t$ elements randomly chosen in $S\setminus U'$, each in its prescribed coset, the probability that all of them are contained in $T$ is at least
$$\frac{\begin{pmatrix} \frac{|T|}{2} \\ \lfloor\frac{j-i}{2}\rfloor\end{pmatrix}}{\begin{pmatrix} \frac{|S\setminus U'|}{2} \\ \lfloor\frac{j-i}{2}\rfloor\end{pmatrix}}\frac{\begin{pmatrix} \frac{|T|}{2} \\ \lceil\frac{j-i}{2}\rceil\end{pmatrix}}{\begin{pmatrix} \frac{|S\setminus U'|}{2} \\ \lceil\frac{j-i}{2}\rceil\end{pmatrix}} \geq \left(\frac{\begin{pmatrix} \frac{|T|}{2} \\ \frac{t}{2}\end{pmatrix}}{\begin{pmatrix} \frac{|S\setminus U'|}{2} \\ \frac{t}{2}\end{pmatrix}} \right)^2 = \left(\frac{\begin{pmatrix} \frac{\ell}{2} \\ \frac{t}{2}\end{pmatrix}}{\begin{pmatrix} \frac{\ell+(t+2)}{2} \\ \frac{t}{2}\end{pmatrix}} \right)^2 \geq \frac{(\ell-t)^{t}}{(\ell+(t+2))^{t}}.$$
It follows that
$$\mathbb{P}(z_{i+1}+z_{i+2}+\cdots+z_j=0)\leq \frac{2}{\ell+2}\left(1-\frac{(\ell-t)^{t}}{(\ell+(t+2))^{t}}\right)$$
and hence Equation \eqref{EB2} becomes
\begin{equation}\label{EB3}\mathbb{E}(X)\leq \frac{2 t^2}{\ell+2}+\frac{2}{\ell+2}\sum_{\substack{h< i<j \leq k\\j - i \leq t}} \left(1-\frac{(\ell-t)^{t}}{(\ell+(t+2))^{t}}\right).\end{equation}
Since there are less than $(\ell+t+2)t$ pairs $(i,j)$ such that $h< i<j \leq k$ and $j - i \leq t$, from Equation \eqref{EB3} we obtain
\begin{equation}\label{EB4}\mathbb{E}(X)\leq \frac{2 t^2}{\ell+2}+\frac{2t (\ell+(t+2))}{\ell+2}\left(1-\frac{(\ell-t)^{t}}{(\ell+(t+2))^{t}}\right).\end{equation}
Here we note that the right-hand side of the last inequality goes to zero as $\ell$ goes to infinite. Therefore for $\ell$ large enough, or more precisely for $\ell\geq\bar{\ell'}$ for a suitable $\bar{\ell'}\in \mathbb{N}$, we have that $\mathbb{E}(X)<1$. This means that, if $k$ is large enough (i.e. if $k>k'_{t,\bar{\ell'}}$) there exists an ordering on which $X=0$ that is, there exists an alternating parity $t$-weak sequencing of $S$.
\endproof
\section{A Direct Construction}
In this section we will see that, for $t=3$ (and, under certain conditions, $t=4$), we can provide an explicit result in the spirit of Theorem \ref{th:main3}. More precisely, if $t=3$:
\begin{thm}
Let $G=N\rtimes_{\varphi} \Z_2$ be a group of size $n$. Then subsets of type $(k/2,k/2)$ of~$G\setminus \{(0,0)\}$ admits an alternating parity $3$-weak sequencing for any $k\geq 4$.
\end{thm}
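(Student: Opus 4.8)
The plan is to use the alternating parity structure to collapse almost all of the $3$-weak sequencing conditions, leaving a single family of forbidden configurations that can then be avoided by a bipartite matching argument valid for every $k\ge 4$. First I would check which of the inequalities $s_i\ne s_j$, $1\le |i-j|\le 3$, are automatic for an alternating ordering $(y_1,\dots,y_k)$. The distance-one condition $s_i\ne s_{i+1}$ is simply $y_{i+1}\ne(0,0)$, which holds because $S\subseteq G\setminus\{(0,0)\}$. For distance two, $s_i\ne s_{i+2}$ means $y_{i+1}+_G y_{i+2}\ne(0,0)$; since two consecutive terms of an alternating sequence lie in opposite cosets of $N\rtimes_\varphi\{0\}$, their sum lies in $N\rtimes_\varphi\{1\}$ and so is never $(0,0)$. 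For distance three the three parities are $a,\,a+1,\,a$, whose sum is $a+1\pmod 2$; hence $y_{i+1}+_G y_{i+2}+_G y_{i+3}$ can vanish only when $a=1$, that is, for a consecutive triple of shape (odd,\,even,\,odd). Thus the only genuine obstruction is that no consecutive (odd,\,even,\,odd) triple may sum to $(0,0)$.

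Next I would analyze such a triple. A direct computation in $G=N\rtimes_\varphi\Z_2$ gives $(u,1)+_G(v,0)+_G(w,1)=\big(u+_N\varphi(1)(v)+_N\varphi(1)(w),\,0\big)$, which equals $(0,0)$ precisely when $\varphi(1)(v)$ is a specific element of $N$ determined by $u$ and $w$. As $\varphi(1)$ is an automorphism of $N$ and group equations have unique solutions, this determines $v$ uniquely; consequently, for each ordered pair of odd elements at most one even element of $S$ is forbidden from being placed between them.

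I would then set up the matching. Fix any ordering $o_1,\dots,o_m$ of the $m=k/2$ odd elements and seek a sequencing $e^{(1)},o_1,e^{(2)},o_2,\dots,e^{(m)},o_m$, where $(e^{(1)},\dots,e^{(m)})$ is a permutation of the even elements to be determined. By the reduction above, the only requirement is that, for $j=1,\dots,m-1$, the even element $e^{(j+1)}$ differ from the (at most one) element forbidden by the pair $(o_j,o_{j+1})$, while the slot $e^{(1)}$ is entirely free (it appears only in the harmless triple $(e^{(1)},o_1,e^{(2)})$ of shape (even,\,odd,\,even)). I would finish with Hall's marriage theorem on the bipartite graph between the $m$ even-slots and the $m$ even elements: the free slot is adjacent to everything, while each of the other $m-1$ slots misses at most one element, so a set $T$ of slots can lose a common neighbour only if every slot of $T$ forbids that same element, forcing $|N(T)|\ge m-1\ge |T|$ when $T$ avoids the free slot and $|N(T)|=m$ otherwise. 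Hall's condition therefore holds for all $m\ge 2$, producing a valid placement and hence the desired alternating parity $3$-weak sequencing whenever $k\ge 4$.

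The argument has no single deep step; the main point is the parity bookkeeping that isolates (odd,\,even,\,odd) triples as the sole obstruction, after which the problem becomes a transparent matching question. The only place requiring a little care is verifying Hall's condition in the boundary case $m=2$ (that is, $k=4$), where it is precisely the presence of one unconstrained slot that makes the condition hold.
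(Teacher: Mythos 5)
Your proof is correct, and it takes a genuinely different route from the paper's. The paper argues greedily: it first applies Proposition \ref{fix5} to obtain an alternating parity ordering of $k-5$ of the elements with the required distinct partial sums, and then appends the remaining five elements (three odd, two even) one at a time, checking at each step that enough choices survive the relevant constraints; parity is exploited only locally (e.g., to note that $s_{k-1}\neq s_{k-4}$ because the second components differ). You instead make the parity bookkeeping global and up-front: in any alternating ordering the distance-$1$ and distance-$2$ conditions are automatic, and among distance-$3$ windows only those of shape (odd, even, odd) can sum to $(0,0)$, the middle even element then being uniquely determined by the outer two since $\varphi(1)$ is an automorphism of $N$ (your computation is valid for non-abelian $N$ as well). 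This reduces the whole problem to a system of distinct representatives: order the odd elements arbitrarily, observe that each internal even slot excludes at most one even element while the first slot is unconstrained, and apply Hall's theorem, whose condition you verify correctly (any nonempty family of non-free slots already sees at least $m-1$ elements and has size at most $m-1$). Your route buys self-containedness --- no appeal to Proposition \ref{fix5}, whose hypotheses ($h\geq t-1$ and $k-h\geq t+2$) actually force $k\geq 7$ here, which is why the paper must treat small $k$ through side remarks --- as well as uniformity over all even $k\geq 4$ and the extra freedom that the odd elements may be prescribed in any order. What the paper's greedy-plus-endgame scheme buys is reusability: it is the same template behind the $t=4$ result (Proposition \ref{rec1}), where your reduction gives no savings, since every distance-$4$ window has parity-neutral sum ($a+(a+1)+a+(a+1)\equiv 0\pmod 2$), so quadruples of both shapes are live constraints and a single matching between even slots and even elements can no longer absorb them.
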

\proof
Let $S$ be a subset of type $(k/2,k/2)$ of~$G\setminus \{(0,0)\}$.  Let assume $k \geq 5$,  then according to Proposition \ref{fix5} there is an ordering of $(k-5)$-elements of $S$ that we denote with $(y_1=(x_1, 0), y_2=(x_2, 1), \ldots, y_{k-5}=(x_{k-5}, (k-6)_2))$, such that, for any ${0\leq i<j\leq \min(k-5,i+t)}$
$$s_i=y_1 + y_2 + \cdots + y_i\not=y_1 + y_2 + \cdots + y_j=s_j\,,$$
where we use the additive notation for elements in $G$.

Now it suffices to order the last $5$ elements that we denote by:
$\{(z_{k-4}, 1), (z_{k-3}, 0), $ $(z_{k-2}, 1), (z_{k-1}, 0), (z_{k}, 1)\}$.

The element $y_{k-4}$ must be chosen in the set
$\{(z_{k-4}, 1), (z_{k-2}, 1), (z_{k}, 1)\}$ and is such that $s_{k-4}\not=s_{k-7}$. So there are at least two possible values that satisfy this relation.

The element $y_{k-3}$ must be chosen, freely in the set
$\{(z_{k-3}, 0), (z_{k-1}, 0)\}$. We will choose this element so that the remaining ones (i.e. $y_{k-2}, y_{k-1}, y_k$) do not sum to zero. This is possible since we have two choices for this value. We remark that we can make this choice even for $k=4$.

The element $y_{k-2}$ must be chosen in the set
$\{(z_{k-4}, 1), (z_{k-2}, 1), (z_{k}, 1)\}\setminus \{y_{k-4}\}$ and is such that $s_{k-2}\not=s_{k-5}$. So there is at least one possible value that satisfies this relation.

Now we have that $y_{k-1}$ is the last element of $\{(z_{k-3}, 0), (z_{k-1}, 0)\}\setminus \{y_{k-3}\}$ and $y_{k}$ is the last element of $\{(z_{k-4}, 1), (z_{k-2}, 1), (z_{k}, 1)\}\setminus \{y_{k-4},y_{k-2}\}$. Due to the choice of $y_{k-3}$, we have that $y_{k-2}+y_{k-1}+y_k\not=0$ which implies that $s_k \neq s_{k-3}$.  We remark that $s_{k-1} \neq s_{k-4}$ because their second components are different and hence the ordering is an alternating parity $3$-weak sequencing.
\endproof
If $t=4$, instead, following the line of Proposition \ref{Ramsey}, we first prove the following Ramsey-like proposition.
\begin{prop}\label{Ramsey21}
Let $S$ be a subset of type $(k/2,k/2)$ of a (not necessarily abelian) group $N\rtimes_{\varphi} \Z_2$. Then, if $k\geq 18$, $S$ contains a subset $T$ of type $(2,3)$ that does not admit zero-sum subsets of types $(2,2)$ or $(1,2)$.
\end{prop}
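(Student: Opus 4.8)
The plan is to mimic the inductive Ramsey argument of Proposition \ref{Ramsey}, but with a fixed, explicit bound $k\geq 18$ rather than an abstract constant, and to exploit the coset structure of $G=N\rtimes_\varphi\Z_2$ to reduce to a small ordinary (graph-theoretic) Ramsey-type count. First I would observe that a subset of type $(2,3)$ can contain zero-sum subsets of size $h\le 4$ only of the two parity-compatible types: a type $(2,2)$ subset (four elements, two even and two odd, which can sum to zero in a suitable alternating-or-not order) and a type $(1,2)$ subset (three elements, one even and two odd). Other small subsets either have the wrong parity count to sum to zero in $\Z_2$ or are automatically nonzero; this is the same parity observation used in Proposition \ref{Ramsey2}, and it is what lets us restrict attention to only these two obstructions.

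Next I would set up the selection. Writing $S'=S\cap(N\rtimes_\varphi\{0\})$ for the even part and $S''=S\cap(N\rtimes_\varphi\{1\})$ for the odd part, each of size $k/2\geq 9$, the goal is to pick two even elements and three odd elements avoiding both bad configurations. The key step is a counting/greedy argument analogous to the ``at most $t!$ extensions'' bound in Proposition \ref{Ramsey}: once a small partial choice of elements is fixed, each additional element is forced into a zero-sum relation only for at most one value in its prescribed coset, so the number of forbidden continuations is bounded by a constant depending only on the number of already-chosen elements. I would therefore first fix the two even elements (at most one forbidden even element per pair of odd elements already committed), and then select the three odd elements one at a time, each time discarding the boundedly many odd elements that would complete a zero-sum $(1,2)$ or $(2,2)$ pattern with the previously chosen elements. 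The explicit threshold $k\geq 18$, i.e. $k/2\geq 9$, should be exactly what makes the remaining pool nonempty at every step once these forbidden counts are tallied.

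The main obstacle I expect is the bookkeeping of the two obstruction types simultaneously: unlike the single-type zero-sum count in Proposition \ref{Ramsey}, here a $(2,2)$ zero-sum and a $(1,2)$ zero-sum interact, since the two even elements participate in both, and one must be careful that fixing the evens first does not leave too few odds. Concretely, for each pair of odd elements already chosen there is at most one even element completing a $(1,2)$ zero-sum and at most one pair-completion for $(2,2)$, while each single odd element already chosen forbids at most one further odd via a $(2,2)$ relation with the two fixed evens; I would tabulate these worst-case counts and check that $9$ odd and $9$ even elements always suffice, which is presumably why the statement can be made unconditional in $k$ beyond the fixed bound. The delicate point is verifying that the numerical slack at $k=18$ is genuinely nonnegative rather than relying on asymptotics, so I would carry out that small arithmetic check explicitly rather than invoking Theorem \ref{HappyTheorem}, effectively replacing the hypergraph Ramsey number by a hand-computed bound tailored to $t=4$.
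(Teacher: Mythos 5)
Your proposal is correct and takes essentially the same route as the paper: the paper likewise discards the hypergraph Ramsey machinery and builds $T$ greedily one element at a time, using the fact that each zero-sum pattern of type $(1,2)$ or $(2,2)$ forbids at most one candidate in the prescribed coset, so that having $k/2-2\geq 7$ candidates at the worst step (against at most six forbidden values) is exactly what forces $k\geq 18$. The only immaterial difference is the selection order --- you fix the two even elements first and then choose the three odd ones, while the paper alternates odd, even, odd, even, odd --- but both schedules produce the same worst-case tally.
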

\proof
We proceed recursively. Given two elements $y_1=(x_1, 1)$ and $y_2=(x_2, 0)$ we have $k/2-1\geq 8$ options for $y_3=(x_3, 1)$ and, for at least $7$ of these options $y_1 + y_2 + x\not=0$. We denote by $y_3$ one of these values.

Given $y_1,y_2,y_3$ we have $k/2-1\geq 8$ options for $y_4= (x_4, 0)$ and, for at least $7$ of these options $y_1+ y_2+ y_3+ x\not=0$. We denote by $y_4$ one of these values.

Given $y_1,y_2,y_3,y_4$ we have $k/2-2\geq 7$ options for $y_5=(x_5, 1)$ and, the following system is satisfied by at least one of these options
$$
\begin{cases}
y_2+y_3+y_4+x\not=0;\\
y_2+y_1+y_4+x\not=0;\\
y_3+y_4+x\not=0;\\
y_1+y_4+x\not=0;\\
y_3+y_2+x\not=0;\\
y_1+y_2+x\not=0.\\
\end{cases}
$$
Denoted by $y_5$ this option, we have found a subset $T=\{y_1,y_2,y_3,y_4,y_5\}$ of $S$ of type $(2,3)$ that does not admit zero-sum subsets of types $(2,2)$ or $(1,2)$.
\endproof
With essentially the same proof one obtains that:
\begin{prop}\label{Ramsey3}
Let $S$ be a subset of type $((k+1)/2,(k-1)/2)$ of a (not necessarily abelian) group $N\rtimes_{\varphi} \Z_2$. Then, if $k\geq 13$, $S$ contains a subset $T$ of type $(3,2)$ that does not admit zero-sum subsets of types $(2,2)$ or $(1,2)$.
\end{prop}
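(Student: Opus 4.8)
The plan is to mimic almost verbatim the recursive argument of Proposition~\ref{Ramsey21}, adjusting only the bookkeeping of parities and the numerical thresholds. Recall that a subset $T$ of type $(3,2)$ has three even elements and two odd ones, and we seek to build it element by element so that no zero-sum subset of type $(2,2)$ or $(1,2)$ arises. The difference from Proposition~\ref{Ramsey21} is that now the majority coset is the even one ($N\rtimes_\varphi\{0\}$), so we will select the three even elements and two odd ones in some fixed alternating-style order and at each stage rule out, via counting, the finitely many choices that would create a forbidden zero-sum relation with the already chosen elements.

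Concretely, I would fix an order of construction, say $y_1=(x_1,0)$, $y_2=(x_2,1)$, $y_3=(x_3,0)$, $y_4=(x_4,1)$, $y_5=(x_5,0)$, picking evens from $S\cap(N\rtimes_\varphi\{0\})$ and odds from $S\cap(N\rtimes_\varphi\{1\})$. A set of type $((k+1)/2,(k-1)/2)$ has $(k+1)/2$ even elements and $(k-1)/2$ odd ones; with $k\geq 13$ these counts are at least $7$ and at least $6$ respectively. At each step, each linear condition of the form (sum of previously chosen elements plus the unknown $x$)$\,=0$ has exactly one solution $x\in N$, hence forbids at most one candidate in the relevant coset. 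First I would choose $y_1,y_2$ freely. Then for each subsequent $y_i$ I would enumerate the zero-sum relations of type $(2,2)$ and $(1,2)$ that could be completed by placing the new element into the partial set, count them, and verify that the number of available candidates in the prescribed coset strictly exceeds the number of forbidden values. Since a type $(2,2)$ zero-sum involves two evens and two odds and a type $(1,2)$ involves one even and two odds, the number of relations to avoid at each step is bounded by a small constant (at most on the order of the number of permutations of the three-or-four already-selected relevant elements), and the hypothesis $k\geq 13$ is exactly what guarantees enough room.

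The only genuine obstacle is the careful accounting at the last step, where $y_5$ must simultaneously avoid all the zero-sum relations of the two forbidden types that use $y_5$ together with the four previously chosen elements; here one must check that the total count of bad values (summed over all orderings and positions, as in the displayed system in the proof of Proposition~\ref{Ramsey21}) stays strictly below the number of remaining even candidates, namely $(k+1)/2-2\geq 5$ when $k\geq 13$. I would therefore write out explicitly the analogue of the six-equation system appearing in Proposition~\ref{Ramsey21}, adapted to type $(3,2)$, confirm it has at most $5$ forbidden solutions, and conclude that a valid $y_5$ exists. Everything else is a routine transcription of the earlier argument, so the proof will end with the remark ``With essentially the same reasoning as in Proposition~\ref{Ramsey21}, the thesis follows.''
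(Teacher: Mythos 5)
Your proposal is correct and takes essentially the same approach as the paper, which proves Proposition~\ref{Ramsey3} simply by noting it follows ``with essentially the same proof'' as Proposition~\ref{Ramsey21}, i.e.\ by the same greedy element-by-element selection with the roles of the two cosets exchanged. If anything, your accounting at the last step is overly cautious: the new element, being even, lies in only two type-$(2,2)$ subsets and one type-$(1,2)$ subset together with the four elements already chosen, so (following the paper's one-equation-per-subset convention from Proposition~\ref{Ramsey21}) at most $3$ values are forbidden, comfortably fewer than the $(k+1)/2-2\geq 5$ available even candidates.
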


Then we have the following recursive result:
\begin{prop}\label{rec1}
Let $N$ be a group of odd size and let $G=N\rtimes_{\varphi} \Z_2$. Then subsets~$S$ of type $(k/2,k/2)$ of~$G\setminus \{(0,0)\}$ admits an alternating parity $4$-weak sequencing whenever $k\geq 18$ and any subsets $S'$ of type $(k/2-5,k/2-5)$ of~$G\setminus \{(0,0)\}$ admits it.
\end{prop}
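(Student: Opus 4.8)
The plan is to prove Proposition~\ref{rec1} by a reduction/recursive argument: I want to take a balanced set $S$ of type $(k/2,k/2)$ with $k \geq 18$, peel off a carefully chosen subset of $10$ elements (five even and five odd), apply the induction hypothesis to the remaining balanced set $S'$ of type $(k/2-5,k/2-5)$ to obtain an alternating parity $4$-weak sequencing of $S'$, and then splice the peeled-off elements back in — either by appending them at the end or by inserting a short block — so that the resulting ordering of all of $S$ is still alternating parity and still $4$-weakly sequenceable. The role of the peeled-off piece is exactly what Propositions~\ref{Ramsey21} and~\ref{Ramsey3} are engineered to supply: a small subset whose internal sub-sums of the dangerous sizes (types $(2,2)$ and $(1,2)$, i.e. the length-$3$ and length-$4$ zero-sum patterns that can occur in an alternating parity walk when $t=4$) never vanish.

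First I would use Proposition~\ref{Ramsey21} to extract from $S$ a subset $T$ of type $(2,3)$ (since $k\geq 18$) with no zero-sum subsets of types $(2,2)$ or $(1,2)$. Because $T$ is not itself balanced (it has three odd and two even elements), I expect to pair it with a complementary extraction: after removing $T$, the leftover set has type $(k/2-2,k/2-3)$, and I would remove a further odd element together with a balanced remainder so as to leave a balanced $S'$ of type $(k/2-5,k/2-5)$ to feed the induction. The bookkeeping here is to arrange that the $10$ removed elements split as five even and five odd, matching the drop from $(k/2,k/2)$ to $(k/2-5,k/2-5)$. I would apply the induction hypothesis to $S'$, obtaining an alternating parity $4$-weak sequencing $(y_1,\ldots,y_{k-10})$.

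The heart of the argument is then the splicing step, which I carry out in the spirit of the direct construction for $t=3$ above. Starting from the valid sequencing of $S'$, I would insert the ten remaining elements at the tail, choosing them one at a time so that each newly created partial sum avoids the (at most four) earlier partial sums within distance $t=4$. At each insertion there is at most one forbidden value per constraint, while the pool of admissible elements of the correct parity is larger than the number of constraints — this is precisely why $T$ was chosen to have no short zero-sum subsets, so that the ``internal'' collisions among the inserted block are ruled out in advance and only the ``boundary'' collisions with the tail of the $S'$-sequencing remain to be handled by counting. I would verify that collisions across cosets are automatically excluded because the two partial sums then lie in different cosets $N\rtimes_\varphi\{0\}$ and $N\rtimes_\varphi\{1\}$ (as already used for $s_{k-1}\neq s_{k-4}$ in the $t=3$ proof).

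The main obstacle I anticipate is the combinatorial bookkeeping of the last-block insertion: unlike the $t=3$ case, with $t=4$ the window of forbidden earlier sums is wider, so I must check for each of the ten inserted positions that the number of surviving choices of the correct parity strictly exceeds the number of active constraints, and that the zero-sum-freeness guaranteed by Propositions~\ref{Ramsey21} and~\ref{Ramsey3} exactly neutralizes the constraints coming from pairs entirely inside the inserted block. Making the parity accounting consistent — ensuring the extracted pieces really total five even and five odd elements while each individually has the no-short-zero-sum property — is where the use of both Proposition~\ref{Ramsey21} (type $(2,3)$) and Proposition~\ref{Ramsey3} (type $(3,2)$) becomes essential, and getting that balance to work out for all $k\geq 18$ is the delicate point of the proof.
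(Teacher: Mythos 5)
Your overall strategy --- extracting $T$ of type $(2,3)$ via Proposition~\ref{Ramsey21} and a complementary $T'$ of type $(3,2)$ via Proposition~\ref{Ramsey3}, applying the recursion hypothesis to the balanced remainder $S'$, and then extending its alternating parity $4$-weak sequencing element by element through the two five-element blocks --- is exactly the paper's decomposition. However, there is a genuine gap in the step you dismiss as ``combinatorial bookkeeping'': your claim that at each insertion the pool of admissible elements of the correct parity strictly exceeds the number of active constraints is false at the second insertion of the first block. Concretely, after the even element $y_{k-9}\in T'$ is placed, the next element $y_{k-8}$ must be odd, so there are only \emph{two} candidates left in $T'$; and since the relevant parity pattern makes both the distance-$3$ and distance-$4$ collisions possible at that position, there are \emph{two} active constraints, namely $y_{k-11}+y_{k-10}+y_{k-9}+x\not=0$ and $y_{k-10}+y_{k-9}+x\not=0$. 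Both of these windows reach back into $S'$, so the zero-sum-freeness of $T'$ neutralizes neither of them, and each constraint can exclude one candidate: counting plus the Ramsey property cannot conclude here.

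The paper closes this case with an idea your proposal does not contain: it retains \emph{two} admissible choices $y_{k-9}'$, $y_{k-9}''$ from the previous step and selects the pair $(y_{k-9},y_{k-8})$ jointly. If no pairing satisfies both constraints, the four failure equations (the system \eqref{system1}) combine to give $y_{k-8}'-y_{k-8}''=-(y_{k-8}'-y_{k-8}'')$, i.e.\ a nontrivial involution, which is impossible because $N$ has odd order. This is the \emph{only} place where the hypothesis that $N$ has odd size is used, and the fact that your argument never invokes that hypothesis is the telltale sign of the gap: without the involution argument (or some substitute), the two boundary constraints can simultaneously kill both odd candidates of $T'$ and the extension stalls. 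The rest of your plan --- the parity accounting of the ten removed elements, the automatic exclusion of cross-coset collisions, and the use of the no-short-zero-sum property to handle constraints internal to each block --- does match the paper's proof.
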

\proof
According to Proposition \ref{Ramsey21}, $S$ has a subset $T$ of type $(2,3)$ that does not admit zero-sum subsets of types $(2,2)$ or $(1,2)$.
Also, due to Proposition \ref{Ramsey3}, $S\setminus T$ has a subset $T'$ of type $(3,2)$ that does not admit zero-sum subsets of types $(2,2)$ or $(1,2)$.

Denoted by $S'=S\setminus (T\cup T')$ we note that $S'$ is of type $(k/2-5,k/2-5)$ and hence, we may assume it admits a $4$-weak sequencing $\omega'$.
We set $\omega'=(y_1=(x_1, 0), y_2=(x_2, 1),\ldots, y_{k-10}=(x_{k-10}, 1))$. Now we want to extend this ordering to the elements of~$T'$. Here we have three options in $T'$ for $y_{k-9}=(x_{k-9}, 0)$ and, for at least two of them, we have that $y_{k-12}+y_{k-11}+y_{k-10}+x\not=0$. We denote by $y_{k-9}'$ and $y_{k-9}''$ those options.

Then we have two options in $T'$ for $y_{k-8}=(x_{k-8}, 1)$ that we denote by $y_{k-8}'$ and $y_{k-8}''$ respectively. Let us assume that none of these options satisfies both
$$\begin{cases}
y_{k-11}+y_{k-10}+y_{k-9}''+x\not=0;\\
y_{k-10}+y_{k-9}''+x\not=0;
\end{cases}$$
and
$$\begin{cases}
y_{k-11}+y_{k-10}+y_{k-9}''+x\not=0;\\
y_{k-10}+y_{k-9}''+x\not=0.
\end{cases}$$
Then we must have
\begin{equation}\label{system1}\begin{cases}
y_{k-11}+y_{k-10}+y_{k-9}'+y_{k-8}'=0;\\
y_{k-10}+y_{k-9}'+y_{k-8}''=0;\\
y_{k-11}+y_{k-10}+y_{k-9}''+y_{k-8}''=0;\\
y_{k-10}+y_{k-9}''+y_{k-8}'=0.
\end{cases}\end{equation}
Equation \eqref{system1} implies that
$$\begin{cases}
y_{k-11}-y_{k-8}''+y_{k-8}'=0;\\
y_{k-11}-y_{k-8}'+y_{k-8}''=0;
\end{cases}$$
and hence that $y_{k-8}'-y_{k-8}''=-(y_{k-8}'-y_{k-8}'')$ which is not possible since $N$ has odd size and does not contain any involution.
Therefore we can find $y_{k-9}\in \{y_{k-9}', y_{k-9}''\}$ and $y_{k-8}\in \{y_{k-8}', y_{k-8}''\}$ such that
$$\begin{cases}
y_{k-11}+y_{k-10}+y_{k-9}+y_{k-8}\not=0;\\
y_{k-10}+y_{k-9}+y_{k-8}\not=0.
\end{cases}$$

Given $y_{k-9}$ and $y_{k-8}$, we have two options in $T'$ for $y_{k-7}=(x_{k-7}, 0)$ and, for at least one of them we have that $y_{k-10}+y_{k-9}+y_{k-8}+x\not=0$. We denote by $y_{k-7}$ this option.

Finally given $y_{k-9}, y_{k-8}$ and $y_{k-7}$, we have one option in $T'$ for $y_{k-6}=(x_{k-6}, 1)$ and for $y_{k-5}=(x_{k-5}, 0)$. Since $T'$ does not admit zero-sum subsets of types $(2,2)$ or $(1,2)$, we must have that:
$$\begin{cases}
y_{k-8}+y_{k-7}+y_{k-6}+y_{k-5}\not=0;\\
y_{k-9}+y_{k-8}+y_{k-7}+y_{k-6}\not=0;\\
y_{k-8}+y_{k-7}+y_{k-6}\not=0.
\end{cases}$$
It follows that we have extended the alternating parity $4$-weak ordering also to the elements of $T'$. Now we want to extend it also to the elements of $T$.

Here we have three options in $T$ for $y_{k-4}=(x_{k-4}, 1)$ and, for at least one of them we have that
$$\begin{cases} y_{k-7}+y_{k-6}+y_{k-5}+x\not=0;\\
y_{k-6}+y_{k-5}+x\not=0.\end{cases}$$
We denote by $y_{k-4}$ this option.

Given $y_{k-4}$, we have two options in $T$ for $y_{k-3}=(x_{k-3}, 0)$ and, for at least one of them we have that $y_{k-6}+y_{k-5}+y_{k-4}+x\not=0$. We denote by $y_{k-3}$ this option.

Given $y_{k-4}$ and $y_{k-3}$, we have two options in $T$ for $y_{k-2}=(x_{k-2}, 1)$ and, for at least one of them we have that $y_{k-5}+y_{k-4}+y_{k-3}+x\not=0$. We denote by $y_{k-2}$ this option. Also, since $T$ does not admit zero-sum subsets of types $(1,2)$, we must have that:
$y_{k-4}+y_{k-3}+y_{k-2}\not=0$.

Finally given $y_{k-4}, y_{k-3}$ and $y_{k-2}$, we have but one option in $T$ for $y_{k-1}=(x_{k-1}, 0)$ and for $y_{k}=(x_{k}, 1)$. Since $T$ does not admit zero-sum subsets of types $(2,2)$ or $(1,2)$, we must have that:
$$\begin{cases}
y_{k-4}+y_{k-3}+y_{k-2}+y_{k-1}\not=0;\\
y_{k-3}+y_{k-2}+y_{k-1}+y_{k}\not=0;\\
y_{k-2}+y_{k-1}+y_{k}\not=0.
\end{cases}$$
It follows that we have extended the alternating parity $4$-weak ordering of $S'$ to the elements of $T'\cup T$ and hence to the elements of $S$.
\endproof
As a consequence, we have been able to prove that:
\begin{thm}
Let $N$ be a group of odd size and let $G=N\rtimes_{\varphi} \Z_2$. Then subsets of type $(k/2,k/2)$ of~$G\setminus \{(0,0)\}$ admits an alternating parity $4$-weak sequencing whenever $k\equiv 8\pmod{10}$.
\end{thm}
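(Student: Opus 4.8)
The plan is to argue by induction along the residue class $k\equiv 8\pmod{10}$, using Proposition \ref{rec1} as the inductive step and settling $k=8$ as the base case. So I would phrase the statement as: for every $k\equiv 8\pmod{10}$ with $k\geq 8$, every subset of type $(k/2,k/2)$ of $G\setminus\{(0,0)\}$ admits an alternating parity $4$-weak sequencing, and then induct on $k$.

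For the inductive step, suppose $k\geq 18$ with $k\equiv 8\pmod{10}$ and assume the claim for all smaller admissible values. A subset $S'$ of type $(k/2-5,\,k/2-5)$ has size $k-10$, which is again congruent to $8$ modulo $10$ and at least $8$; by the inductive hypothesis every such $S'$ admits an alternating parity $4$-weak sequencing. Since $N$ has odd order and $k\geq 18$, Proposition \ref{rec1} applies verbatim and extends a sequencing of the inner part to all of $S$. Thus the whole theorem reduces to the base case $k=8$, which is where the genuine work lies.

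For $k=8$ we must place four even elements at positions $1,3,5,7$ and four odd elements at positions $2,4,6,8$ so that the partial sums satisfy $s_i\neq s_j$ for $1\le |i-j|\le 4$. As $s_i=s_j$ forces $b_i=b_j$ and every element of $S$ is nonzero, the only nontrivial conditions are the eight window inequalities $y_1{+}y_2{+}y_3{+}y_4\neq 0$, $y_2{+}y_3{+}y_4\neq 0$, $y_2{+}y_3{+}y_4{+}y_5\neq 0$, $y_3{+}y_4{+}y_5{+}y_6\neq 0$, $y_4{+}y_5{+}y_6\neq 0$, $y_4{+}y_5{+}y_6{+}y_7\neq 0$, $y_5{+}y_6{+}y_7{+}y_8\neq 0$ and $y_6{+}y_7{+}y_8\neq 0$. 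I would construct the ordering by choosing elements with look-ahead: $y_1,y_2,y_3$ are free in their cosets; for $y_4$ there remain three admissible odd elements while the first two inequalities forbid at most two values, so a legal choice survives; similarly the choice of the even element $y_5$ is constrained by at most one further inequality. The point is that all the early positions carry strictly more options than forbidden values.

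The main obstacle is the tail. Once $y_1,\dots,y_4$ are fixed there are only four completions (two choices for the even pair $\{y_5,y_7\}$ and two for the odd pair $\{y_6,y_8\}$), which between them must meet the last six inequalities; and a careless greedy choice of $y_5$ can leave \emph{both} remaining odd elements forbidden as $y_6$ by two \emph{different} inequalities, a dead end that no involution argument can repair. So the delicate step is to organize the choice of $y_4$ and $y_5$ (using the freedom still present there, together with the reversal symmetry of the eight conditions) so that at least one of the surviving completions is admissible, breaking the single genuinely two-sided tie exactly as in Proposition \ref{rec1}: were both relevant placements to fail, the offending sum conditions combine into an equation $w=-w$ with $w\neq 0$ in $N$, i.e.\ an involution, contradicting that $|N|$ is odd. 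Turning this from a threat of an unavoidable obstruction into an honest finite verification — tracking precisely which inequality fails for each completion so that the failure collapses to that involution equation — is the crux of the base case and hence of the whole argument.
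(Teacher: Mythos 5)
Your induction scaffolding is exactly right and matches the paper: for $k\geq 18$ with $k\equiv 8\pmod{10}$, a set of type $(k/2-5,\,k/2-5)$ has size $k-10\equiv 8\pmod{10}$ and $k-10 \geq 8$, so Proposition \ref{rec1} (which needs $|N|$ odd and $k\geq 18$) carries the induction, and everything reduces to the base case $k=8$. The gap is that you never actually prove the base case. Your own text concedes this: you set up the greedy choice, observe correctly that it can dead-end (after $y_1,\dots,y_5$ are fixed, the two inequalities $y_3+y_4+y_5+y_6\neq 0$ and $y_4+y_5+y_6\neq 0$ can forbid both remaining odd elements via two \emph{different, mutually consistent} equations, so no involution identity $w=-w$ arises there), and then declare that organizing the earlier choices so that some completion survives is ``the crux of the base case and hence of the whole argument.'' Naming the crux is not resolving it: the failure modes are not confined to a single two-sided tie that collapses to an involution equation, and escaping the dead ends requires backtracking over $y_4$, $y_5$ (and possibly earlier choices), i.e.\ a case analysis over the possible patterns of coincidence among the eight window sums, valid for an \emph{arbitrary} odd-order group $N$ --- which you have neither carried out nor shown to terminate.

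This is precisely the point where the paper does something different: the authors settle $k=8$ by an exhaustive computer verification in SageMath, and they explicitly remark that the neighboring cases $k=10,12,14,16$ were infeasible even for their procedure --- strong evidence that no short counting-plus-involution argument of the kind you sketch is available. So your proposal is the paper's proof minus its one nontrivial ingredient. To repair it you must either supply the full finite (necessarily symbolic, since $N$ ranges over all odd-order groups) case analysis for $k=8$, or invoke a computational verification as the paper does.
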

\proof
We have checked the case $k=8$ with computer help, using SageMath \cite{SageMath}. Unfortunately, the cases $k=10,12,14,16$ appear to be unfeasible with our procedure.  Then the case $k\equiv 8\pmod{10}$ follows by applying, recursively, Proposition \ref{rec1}.
\endproof

\section*{Acknowledgements}
The first author was partially supported by INdAM--GNSAGA.

\end{document}